\numberwithin{equation}{section}
\renewcommand{\labelenumi}{\rm (\theenumi)}
\newcommand{\A}{{\mathcal A}}
\newcommand{\N}{{\mathbb N}}
\newcommand{\R}{{\mathbb R}}
\newcommand{\HH}{{\mathcal H}}
\newcommand{\HHH}{{\mathbb H}}
\newcommand{\LL}{{\mathcal L}}
\newcommand{\rr}{{\mathbf r}}
\newcommand{\rom}{\romannumeral}
\def\Vect#1{\mbox{\boldmath$d$} }
\DeclareMathOperator{\Var}{Var}
\DeclareMathOperator{\loc}{loc}
\newcommand{\pd}[2]{\displaystyle{\frac{\partial#1}{\partial#2}}}
\newcommand{\od}[2]{\displaystyle{\frac{d#1}{d#2}}}
\newcommand{\norm}[1]{\Vert#1\Vert}
\newcommand{\norms}[1]{\Vert#1\Vert^2}
\newcommand{\Norm}[2]{{\left\|#1\right\|_#2}}
\newcommand{\Norms}[2]{{\left\|#1\right\|^2_#2}}
\newtheorem{them}{Theorem}
\newtheorem*{themB}{Theorem B}
\newtheorem{prop}[them]{Proposition}
\newtheorem*{propA}{Proposition A}
\newtheorem{lemma}[them]{Lemma}
\newtheorem{definition}{Definition}
\newtheorem{notation}{Notation}
\newtheorem{remark}{Remark}
\newtheorem{example}{Example}
\title
{
Asymptotically free property of the solutions \hfill\\
of  an abstract linear hyperbolic equation \hfill\\
with time-dependent coefficients \hfill
}
\author{
Taeko Yamazaki \hfill
\\
Department of Mathematics, Faculty of Science and Technology, \hfill
\\
Tokyo University of Science, Noda, Chiba, 278-8510, Japan \hfill
\\
E-mail: \lowercase{yamazaki$\_$taeko@ma.noda.tus.ac.jp} \hfill
}
\begin{document}

\maketitle

\begin{abstract}
This paper is concerned with an abstract dissipative hyperbolic equation with time-dependent coefficient. 
Under an assumption which ensures that the energy does not decay, this paper provides a condition on the coefficient, which is  necessary and sufficient so that the solutions tend to the solutions of the free wave equation. 

\bigskip
\noindent 
Keywords: {abstract linear hyperbolic equation; asymptotic behavior; asymptotically free property}
\end{abstract}

\section{Introduction}

Let $H$ be a separable complex Hilbert space $H$ with inner product $(\cdot,\cdot)_H$ and norm $\norm{\cdot}$.  
Let $A$ be a non-negative injective self-adjoint operator in $H$ with domain $D(A)$. 
Let $c(t)$ be a function which is of bounded variation and satisfies
\begin{equation}\label{cass1}
\inf_{t \ge 0} c(t) > 0.
\end{equation}
We consider the initial value problem of the abstract dissipative wave equation
\begin{align}\label{WE}
& u^{\prime \prime}(t) + c(t)^2 Au(t) + b(t) u^\prime(t) = 0 \quad t \ge 0, 
\\
&u(0) = \phi_0, \quad u^\prime(0) = \psi_0.
\label{initial}
\end{align}
with time-dependent coefficients. 
There are a number of results concerning \eqref{WE}--\eqref{initial} 
(see, for example, \cite{A, Ma, MR, RW1, RW2, EFH}, \cite[Section 2]{YaHP} and references therein). 

In this paper, under the assumption that $b(t)$ is an integrable function on $[0,\infty)$, 
we give a necessary and sufficient condition for the existence of a wave speed $c_*$ and a solution $v$ of the free wave equation 
\begin{align}\label{fe}
&v^{\prime \prime}(t) + c_*^2 Av(t) = 0, 
\end{align}
satisfying 
\begin{equation}\label{limit0}
\lim_{t \to \infty} \left( \norm{A^{1/2}(u(t) - v(t))} + \norm{u^\prime(t) - v^\prime(t)} \right) = 0.
\end{equation}

First, Arosio \cite[Theorem 3]{A} considered 
\begin{alignat}{2} 
	\label{wave1}
	&\pd{^2 u}{t^2}(t,x)  = a(t) \Delta u(t,x) + G(x,t) + H(x,t)
	&&
	\text{ in  }[0,\infty) \times \Omega,
	\\
	\label{wave2}
	& u(t,x) = 0 
	&& 	\text{ on } [0,\infty) \times \partial \Omega,
	\\
	\label{wave3}
	&u(0,x) = \phi_0(x), 
	\quad \pd{u}{t}(0,x) = \psi_0(x) \; 
	&& 	\text{ in } \Omega.
\end{alignat}
for  a  bounded open set $\Omega$ in $\R^n$,    
where $a(t) = c(t)^2 + d(t)$ with $c(t)^2 \in BV(0,\infty)$ and $d(t) \in L^1(0,\infty)$ satisfying  
	$0 <  \nu \le a(t)$ for almost every $t \in (0,\infty)$, $G \in L^1((0,\infty);L^2(\Omega))$, and 
	$H  \in BV((0,\infty); H^{-1}(\Omega))$ with $\lim_{t \to \infty} H(t) = 0 $ in $H^{-1}(\Omega)$. 
Then he showed the following. 
\begin{enumerate}
		\item If 
\begin{equation}\label{c-cinf}
\lim_{t \to \infty} \int_0^t (c(s) - c_\infty) ds 
\; \text{exists and is finite},  
\end{equation}
where
\begin{equation*}
	c_\infty = \lim_{t \to \infty} c(t), 
	\end{equation*}
then for every weak solution $u \in C([0,\infty); H^1_0(\Omega)) \cap  C^1([0,\infty);  L^2(\Omega))$ of 
\eqref{wave1}--\eqref{wave2},  
there exists a solution 
$v \in C([0,\infty); H^1_0(\Omega)) \cap  C^1([0,\infty);  L^2(\Omega))$ of  the free wave equation 
\begin{alignat}{2} 
\label{fwave1}
&\pd{^2 v}{t^2}(t,x)  = c_\infty^2 \Delta v(t,x) 
&&
\text{ in  }[0,\infty) \times \Omega,
\\
\label{fwave2}
& v(t,x) = 0 
&&
\text{ on } [0,\infty) \times \partial \Omega,
\end{alignat}
satisfying
\begin{equation}
\label{limit1}
\lim_{t \to \infty} \left( \Norm{u(t) - v(t)}{{H^1_0(\Omega)}} + \Norm{u^\prime(t) - v^\prime(t)}{{L^2(\Omega)}} 
\right) = 0.
\end{equation}

\item Conversely, if there exists a weak solution $u(t) \in C([0,\infty); H^1_0(\Omega)) \cap  C^1([0,\infty);  L^2(\Omega))$ of 
\eqref{wave1}--\eqref{wave2} 
and a non-trivial solution $v(t)$ of the free wave equation 
\eqref{fwave1}--\eqref{fwave2}  
such that \eqref{limit1} holds, 
then \eqref{c-cinf} must hold. 
\end{enumerate}
If we take $H = L^{2}(\Omega)$, $A = - \Delta$ with $D(A) = H^2(\Omega) \cap H^1_0(\Omega)$ and 
$b(t) \equiv 0$, 
the abstract problem \eqref{WE}--\eqref{initial} becomes \eqref{wave1}--\eqref{wave2} above with 
$a(t) = c(t)^2$  and  $G(t) \equiv H(t)  \equiv 0$. 
The method of \cite{A} is applicable 
for  positive self-adjoint operators $A$ with compact resolvent. 
Here we note that  if $c(t)$ satisfies \eqref{cass1},  
then the assumptions $c^2(t) \in BV([0,\infty))$ and $c(t) \in BV([0,\infty))$ are equivalent.
 
Matsuyama \cite[Theorem 2.1]{Ma} considered the problem \eqref{wave1}--\eqref{wave2} for $\Omega = \R^n$, 
where $a(t) = c(t)^2$ with $c(t)$ satisfying  \eqref{cass1} and 
\begin{equation}
\label{cassM}
c \in Lip_{loc}([0,\infty)), \quad c^\prime \in L^1(0,\infty),  
\end{equation}
$G(t) \equiv H(t) \equiv 0$, that is, the problem   
\eqref{WE}--\eqref{initial} with $H = L^2(\R^n)$, $A = -\Delta$ with $D(A) = H^2(\R^n)$ and $b(t) \equiv 0$, 
and showed the following:  
Assume that  \eqref{c-cinf} holds. 
Then for every solution $u \in \bigcap_{j=0,1,2} C^j([0,\infty); {H}^{s - j}(\R^n))$ $(s \ge 1)$ of \eqref{wave1}--\eqref{wave2}, 
there exists a solution $v$ 
 of  the free wave equation \eqref{fwave1}--\eqref{fwave2} satisfying
\begin{equation}\label{uvRn}
\lim_{t \to \infty} \left( \Norm{\nabla(u(t) - v(t))}{{H^{s-1}(\R^n)}}
+ \Norm{u^\prime(t) - v^\prime(t)} {{H^{s-1}(\R^n)}} \right) = 0.
\end{equation}
On the other hand, he showed that 
if
\begin{equation}\label{cabs}
\lim_{t \to \infty} \left| \int_0^t (c(s) - c_\infty) ds \right| = \infty,
\end{equation}
there exists a non-trivial free solution $u$ of  \eqref{wave1}--\eqref{wave2} 
such that no solution $v$ of the free wave equation \eqref{fwave1}--\eqref{fwave2} satisfies \eqref{uvRn}. 
Then,  applying the result to Kirchhoff equation, he proved in \cite{Ma_exam}  the existence of a non-trivial small initial data  such that the solution of Kirchoff equation is not asymptotically free. 

 Matsuyama and Ruzhansky \cite[Theorem 1.1]{MR} considered the system  
$D_t U = A(t,D_x)U $ in $L^2(\R^n)^m$, and generalized the results of \cite{Ma}. Furthermore, in a case $m = 1$ and $A(t,D_x) = - c(t)^2 \Delta$, this result is an improvement of the necessary condition for the asymptotically freeness of  \cite{Ma} as follows: 
Assume that $c$ satisfies \eqref{cass1} and \eqref{cassM}. 
If \eqref{cabs} holds, then for every non-trivial solutions of \eqref{wave1}--\eqref{wave2} with radially symmetric initial data, there exists no solution of the free wave equation  \eqref{fwave1}--\eqref{fwave2} satisfying \eqref{uvRn}. 

The purpose of this paper is to show a necessary and sufficient condition for asymptotically free property of \eqref{WE}--\eqref{initial} 
for general non-negative injective self-adjoint operator $A$ (Theorem 1).  
Especially we are interested in the necessary condition. 
To obtain the necessary condition, Arosio \cite[Theorem 3, (\rom 2)]{A} employed      
the discreteness of the spectrum corresponding to $A$,   
and Matsuyama and  Ruzhansky \cite [Theorem 1.1]{MR} employed  the Riemann--Lebesgue theorem for the Fourier transform. 
In this paper, we use the property of continuous unitary group $e^{i t A^{1/2}}$.  

Another difference between the previous results and the result of this paper is that we do not assume $c_* = c_\infty$ in \eqref{fe} a priori. 
We show that  if there exists a non-trivial solution $u$ of \eqref{WE}  
which approaches to a solution of  \eqref{fe} with some wave speed $c_*$, 
then $c_*$ coincides with $c_\infty= \lim_{t \to \infty} c(t)$ 
(Theorem 1 (\rom 2)). 

 The result of this paper is applied to dissipative Kirchhoff equations in \cite{Ya}  to obtain the necessary decay  condition on the dissipative term for the asymptotically free property. 
This condition is  essentially  stronger than that of linear dissipative wave equation.  

\section{Main result}

\begin{notation}
For every $\alpha \ge 0$,   the domain $D(A^{\alpha})$ of $A^{\alpha}$  
becomes a Hilbert space $\mathbb{H}_\alpha$ equipped with the inner product 
$$
(f,g)_{{\mathbb{H}_\alpha}} := (A^{\alpha}f, A^{\alpha}g)_H + (f,g)_H. 
$$
The norm is denoted by $\Norms{f}{{\mathbb{H}_\alpha}} = (f,f)_{{\mathbb{H}_\alpha}}$. 
We note that $\HHH_0 = H$. 
For every $\alpha < 0$, 
let $\mathbb{H}_\alpha$ denote the dual space of $\mathbb{H}_{-\alpha}$ with the dual norm, namely, 
$\HHH_\alpha$ is the completion of $H$ by the norm 
$$
 \Norm{f}{{\mathbb{H}_\alpha}} 
= \sup \{ |(f,g)_{H}| ; g \in \mathbb{H}_{-\alpha},  \Norm{g}{{-\alpha}} = 1 \}.
$$
\end{notation}

\begin{notation}
For every $\alpha > 0$, let $\HH_\alpha$ denote the completion of $D(A^\alpha)$ by the norm $\norm{A^\alpha \cdot}$. 
Let  $\A^\alpha$ be extension of $A^\alpha$ on $\HH_\alpha$. 
The fact that  $A^\alpha$ is an injective self-adjoint operator implies that the range $R(A^\alpha)$ is dense in $H$,  
and thus $\A^\alpha: \HH_\alpha \to H$ is bijective. 
From this fact and the definition, it follows that
$
\A^\alpha: \HH_\alpha \to H
$ 
is an isometric isomorphism. 
\end{notation}

\begin{example}
Let $H = L^2(\R^n)$ and $A = -\Delta$ with $D(A) = H^2(\R^n)$. 
For $\alpha > 0$, the space $\HH_\alpha(\R^n) $ equals the homogeneous Sobolev space ${\dot H}^{2 \alpha}$, and $\mathbb{H}_{-\alpha}$ equals the negative Sobolev space $H^{-2\alpha}(\R^n)$.   
\end{example} 
\bigskip

\begin{notation}
For a Banach space $X$, let $AC([0,\infty);X)$  denote all of $X$ valued absolutely continuous functions on $[0,\infty)$, and 
$AC_{\loc}([0,\infty);X) = \{f \in C([0,\infty)); f \in AC([0,T]) $ 
for every $T > 0 \}$.
\end{notation}
\textbf{}
\medskip
We consider the equation \eqref{WE}--\eqref{initial} and a free wave equation \eqref{fe} in a somewhat wide class as 
\begin{align}\label{we0}
&u^{\prime \prime}(t) + c(t)^2 A^{1/2} \A^{1/2} u(t) + b(t) u^\prime(t) = 0, \quad t \ge 0, 
\\
&u(0) = \phi_0, \quad u^\prime(0) = \psi_0,  
\label{initial0}
\end{align}
for $(\phi_0, \psi_0) \in \HH_{1/2} \times H$, and 
\begin{align}\label{fe0}
&v^{\prime \prime}(t) + c_*^2 A^{1/2} \A^{1/2} v(t)  = 0, \quad t \ge 0.  
\end{align}
 
\begin{definition} We say that $u$ is a weak solution of \eqref{we0} if  
$u \in C \left([0,\infty): \HH_{1/2} \right)$, 
\begin{equation*}
\begin{aligned}
u(t) - u(0) & \in \bigcap_{j = 0,1} C^j ([0,\infty);\mathbb{H}_{(1-j)/2} ),
\\
u^\prime(t) &\in  AC_{\loc} \left([0,\infty); \HHH_{-1/2} \right),
\end{aligned}
\end{equation*} 
and \eqref{we0} holds in the space $\mathbb{H}_{-1/2}$ for almost every 
$t \in (0,\infty)$.  

A weak solution of \eqref{fe0} is defined as a weak solution of  \eqref{we0} with $c(t) \equiv c^*$ and $b(t) \equiv 0$.  
\end{definition}

Here we note that if $u$ is a weak solution of \eqref{we0}--\eqref{initial0}, then $\mathbf{x} : = (\A^{1/2}u, u^\prime)$ 
is a weak solution of the following Cauchy problem:
\begin{align}
\label{WES}
&\dfrac{d}{dt} \mathbf{x}(t)
+ \begin{pmatrix} 
0     & - A^{1/2} \\
c(t)^2 A^{1/2} & b(t)   
\end{pmatrix}
\mathbf{x}(t)
=  
\begin{pmatrix}
 0 \\
 0 
\end{pmatrix},
\\
\label{initialS}
&\mathbf{x}(0) = \begin{pmatrix}
\A^{1/2}\phi_0 \\
 \psi_0 
\end{pmatrix} \in H \times H, 
\end{align}
in the sense that 
$$
\mathbf{x}(t)
\in C \left([0,\infty); H \times H \right) \cap
AC_{\loc} \left([0,\infty); \HHH_{-1/2} \times \HHH_{-1/2} \right), 
$$
and that \eqref{WES} holds in $\HHH_{-1/2} \times \HHH_{-1/2}$ for almost every  $t \in (0,\infty)$. 
Conversely, if 
$\mathbf{x} = (w, z) $  
is a weak solution of \eqref{WES}--\eqref{initialS},  
then  $u = \A^{-1/2}w$ is a weak solution of \eqref{we0}--\eqref{initial0}.

Our main result is the following:

\begin{them}\label{maintheorem}
Let $c(t)$ be of bounded variation on $(0,\infty)$  
 satisfying \eqref{cass1}, and put $c_\infty = \lim_{t \to \infty} c(t)$.  
Let $b(t)$ be an integrable function on $[0, \infty)$. 
Then the following holds. 
\begin{enumerate}
\item
Suppose that \eqref{c-cinf} holds. 
Then  
for every weak solution $u$ of \eqref{we0}, 
there exists a unique weak solution $v$ of the free wave equation \eqref{fe0} with wave speed $c_* = c_\infty$ 
such that 
\begin{equation}\label{limit}
	\lim_{t \to \infty} \left( \norm{\A^{1/2}(u(t) - v(t))} + \norm{u^\prime(t) - v^\prime(t)} \right) = 0
\end{equation}
holds. 

\item Suppose that there exists a non-trivial weak solution $u$ of \eqref{we0},  
a positive constant $c_*$ and a weak solution $v$ of the free wave equation \eqref{fe0}  
such that \eqref{limit} holds.   
Then $c_* = c_\infty$ and  \eqref{c-cinf} must hold. 
\end{enumerate}
\end{them}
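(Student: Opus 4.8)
The plan is to read both assertions off the action of the strongly continuous unitary group $e^{isB}$, $s\in\R$, where $B$ denotes the self-adjoint operator $A^{1/2}$ together with its isometric extension $\A^{1/2}$, on the ``Riemann invariants'' $\tilde W_\pm(t):=c(t)\A^{1/2}u(t)\pm i\,u'(t)\in H$ of $u$ and on the corresponding invariants $W^v_\pm(t):=c_*\A^{1/2}v(t)\pm i\,v'(t)$ of a free solution $v$. First I would record the a priori bounds. Writing $w=\A^{1/2}u$, $z=u'$ for the solution of the first-order system \eqref{WES} and $E(t)=\tfrac12(\|z(t)\|^2+c(t)^2\|w(t)\|^2)$, the measure identity $dE=-b\|z\|^2\,dt+c\|w\|^2\,dc$ together with $b\in L^1$, $c\in BV$, $\inf c>0$ gives $|dE|\le E\,d\nu$ for the finite measure $d\nu=2|b|\,dt+\tfrac{2}{\inf c}|dc|$, so a Gronwall argument yields $E(t)\le e^{\nu([0,\infty))}E(0)$ for all $t$; hence $w,z$ are bounded in $H$ uniformly in $t$, and if $u\not\equiv0$ then $E(0)>0$ (by injectivity of $\A^{1/2}$ and uniqueness for \eqref{we0}), so $E$ is also bounded below by a positive constant. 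From \eqref{WES} one gets $\tfrac{d}{dt}\tilde W_\pm\pm ic(t)B\tilde W_\pm=f_\pm$ in $\HHH_{-1/2}$ with $f_\pm=\dot c\,w\mp ib\,z$, which by the bounds above is an $H$-valued measure of finite total variation. The propagator of $\tfrac{d}{dt}\pm ic(t)B$ is $U_\pm(t,s)=e^{\mp i(\int_s^t c)B}$, unitary on $H$, and since $B$ is self-adjoint $U_\pm(t,s)=U_\pm(t,0)U_\pm(0,s)$ with $U_\pm(0,s)=U_\pm(s,0)^{*}$; Duhamel's formula and unitarity give $\tilde W_\pm(t)=U_\pm(t,0)\bigl(\tilde W_\pm(0)+\int_{[0,t]}U_\pm(0,s)f_\pm(ds)\bigr)$, where the bracket converges absolutely in $H$ because $U_\pm(0,s)$ is unitary and $f_\pm$ has finite total variation. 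Thus, with $\beta(t):=\int_0^t c(s)\,ds$,
\[
\tilde W_\pm(t)=e^{\mp i\beta(t)B}h_\pm+o(1)\quad(t\to\infty)\text{ in }H
\]
for some $h_\pm\in H$, whereas the same computation with $c\equiv c_*$, $b\equiv0$ gives $W^v_\pm(t)=e^{\mp ic_*tB}W^v_\pm(0)$ exactly.

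\textbf{Part (i).} Assume \eqref{c-cinf} and write $\beta(t)=c_\infty t+L+\eta(t)$ with $\eta(t)\to0$. Then $e^{\mp i\beta(t)B}h_\pm=e^{\mp ic_\infty tB}e^{\mp iLB}\bigl(e^{\mp i\eta(t)B}h_\pm\bigr)$, and strong continuity of the group gives $e^{\mp i\eta(t)B}h_\pm\to h_\pm$, so $\tilde W_\pm(t)=e^{\mp ic_\infty tB}\hat h_\pm+o(1)$ with $\hat h_\pm:=e^{\mp iLB}h_\pm$. Let $v$ be the weak solution of \eqref{fe0} with $c_*=c_\infty$ determined by $W^v_\pm(0)=\hat h_\pm$ (equivalently $\A^{1/2}v(0)=\tfrac{\hat h_++\hat h_-}{2c_\infty}$, $v'(0)=\tfrac{\hat h_+-\hat h_-}{2i}$); then $W^v_\pm(t)=e^{\mp ic_\infty tB}\hat h_\pm$, so $\tilde W_\pm(t)-W^v_\pm(t)\to0$. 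Taking $\tfrac12[(\tilde W_+-W^v_+)+(\tilde W_--W^v_-)]=c(t)\A^{1/2}u(t)-c_\infty\A^{1/2}v(t)$ and using $c(t)\to c_\infty$ with $w$ bounded gives $\|\A^{1/2}(u-v)(t)\|\to0$, while $\tfrac1{2i}$ of the difference gives $\|u'(t)-v'(t)\|\to0$: this is \eqref{limit}. Uniqueness of $v$ is immediate since the difference of two such solutions is a free solution whose conserved energy tends to $0$.

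\textbf{Part (ii), $c_*=c_\infty$.} Suppose $u\not\equiv0$, $v$ solves \eqref{fe0} with speed $c_*>0$, and \eqref{limit} holds. By the energy bounds the energy of $u$ stays bounded below, hence so does the conserved energy of $v$, so $v\not\equiv0$. From \eqref{limit}, $c(t)\to c_\infty$ and boundedness of $\A^{1/2}u,\A^{1/2}v$ we get $\tilde W_\pm(t)-\bigl(c_\infty\A^{1/2}v(t)\pm iv'(t)\bigr)\to0$, and expressing $v$ through $W^v_\pm$ gives $c_\infty\A^{1/2}v(t)+iv'(t)=Pe^{-ic_*tB}W^v_+(0)+Qe^{ic_*tB}W^v_-(0)$ with $P=\tfrac{c_\infty+c_*}{2c_*}$, $Q=\tfrac{c_\infty-c_*}{2c_*}$ (and the analogue with the roles of $\pm$ swapped for $c_\infty\A^{1/2}v-iv'$). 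Combining with $\tilde W_\pm(t)=e^{\mp i\beta(t)B}h_\pm+o(1)$, multiplying by $e^{\pm i\beta(t)B}$, and setting $\sigma_\pm(t):=\int_0^t(c\pm c_*)$ (so $\beta\mp c_*t=\sigma_\mp$), I obtain
\[
h_+=Pe^{i\sigma_-(t)B}W^v_+(0)+Qe^{i\sigma_+(t)B}W^v_-(0)+o(1),\qquad h_-=Pe^{-i\sigma_-(t)B}W^v_-(0)+Qe^{-i\sigma_+(t)B}W^v_+(0)+o(1).
\]
If $c_*\ne c_\infty$ then $Q\ne0$, $\sigma_+(t)\ge t(\inf c+c_*)\to\infty$, and $\sigma_-'=c-c_*\to c_\infty-c_*\ne0$, so $\sigma_-$ is eventually strictly monotone onto a half-line. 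Applying $e^{-i\sigma_-(t)B}$ to the first identity (using $\sigma_+-\sigma_-=2c_*t$) and taking squared norms gives $\operatorname{Re}\langle e^{-i\sigma_-(t)B}h_+,W^v_+(0)\rangle\to C_1$ for an explicit constant $C_1$; since $E^B(\{0\})=0$ (as $A$ is injective) the complex measure $\langle E^B(\cdot)h_+,W^v_+(0)\rangle$ has no atom at $0$, so the mean ergodic theorem, applied along the eventually-monotone $\sigma_-$, forces $C_1=0$, whence $\|h_+\|^2+P^2\|W^v_+(0)\|^2=Q^2\|W^v_-(0)\|^2$; symmetrically $\|h_-\|^2+P^2\|W^v_-(0)\|^2=Q^2\|W^v_+(0)\|^2$. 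Adding, $\|h_+\|^2+\|h_-\|^2=(Q^2-P^2)\bigl(\|W^v_+(0)\|^2+\|W^v_-(0)\|^2\bigr)=-\tfrac{c_\infty}{c_*}\bigl(\|W^v_+(0)\|^2+\|W^v_-(0)\|^2\bigr)$; the left side is $\ge0$ and the right is $\le0$, so both vanish, giving $W^v_\pm(0)=0$, i.e.\ $v\equiv0$ — a contradiction. Hence $c_*=c_\infty$.

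\textbf{Part (ii), \eqref{c-cinf}, and the main obstacle.} With $c_*=c_\infty$ one has $P=1$, $Q=0$, so the displayed relations reduce to $e^{i\gamma(t)B}W^v_+(0)\to h_+$ and $e^{-i\gamma(t)B}W^v_-(0)\to h_-$, where $\gamma(t)=\int_0^t(c-c_\infty)$ is continuous and at least one $W^v_\pm(0)$ is nonzero (else $v\equiv0$). It then remains to prove: \emph{if $\zeta\ne0$, $\gamma$ is continuous and $e^{i\gamma(t)B}\zeta$ converges in $H$, then $\gamma$ converges in $\R$}; this I expect to be comparatively soft. (a) For $\zeta\ne0$ the orbit $\rho\mapsto e^{i\rho B}\zeta$ does not converge as $\rho\to\pm\infty$, for otherwise $\|(I-e^{iaB})\zeta\|=\|e^{i\rho B}\zeta-e^{i(\rho+a)B}\zeta\|\to0$ for every fixed $a$, forcing $e^{iaB}\zeta=\zeta$ for all $a$ and hence $\zeta=0$ by injectivity of $B$. (b) If $\gamma$ were unbounded above, then for the $\vare$-threshold $T$ in the convergence $e^{i\gamma(t)B}\zeta\to w$ and any $\rho>\max_{[0,T]}\gamma$, the intermediate value theorem produces $t\ge T$ with $\gamma(t)=\rho$, so $\|e^{i\rho B}\zeta-w\|<\vare$; letting $\vare\to0$ this says $e^{i\rho B}\zeta$ converges as $\rho\to\infty$, contradicting (a), and likewise $\gamma$ is bounded below. (c) If $\gamma$ were bounded but divergent, choose $\alpha<\beta$ strictly between $\liminf\gamma$ and $\limsup\gamma$; the intermediate value theorem again shows each $\rho\in[\alpha,\beta]$ is attained at arbitrarily large times, so $e^{i\rho B}\zeta=w$ for all such $\rho$, whence $e^{isB}\zeta=\zeta$ for $|s|\le\beta-\alpha$ and again $\zeta=0$. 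Therefore $\gamma$ converges, which is exactly \eqref{c-cinf}. The genuinely delicate points in the above are the justification of the measure-valued Duhamel identity at the level of weak solutions (paragraph~1) and, above all, the quantitative argument forcing $c_*=c_\infty$ without assuming it a priori — the spectral/mean-ergodic input together with the bookkeeping of the four relations among $h_\pm$ and $W^v_\pm(0)$ — which is where I expect the real work to lie; the spectral lemma of the last paragraph is easy once convergence of $e^{i\gamma(t)B}\zeta$ is transferred, via the intermediate value theorem, to convergence of the full orbit $e^{i\rho B}\zeta$ over the range of $\gamma$.
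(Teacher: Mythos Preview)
Your proposal is correct and broadly parallel to the paper's proof, but two of your steps are handled by genuinely different arguments.

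\textbf{Construction of the asymptotic states.} You obtain $h_\pm$ by a single measure-valued Duhamel formula $\tilde W_\pm(t)=U_\pm(t,0)\bigl(\tilde W_\pm(0)+\int_{[0,t]}U_\pm(0,s)\,f_\pm(ds)\bigr)$ with $f_\pm=w\,dc\mp ibz\,dt$, using only that $w,z$ are continuous and bounded and that $|dc|+|b|\,dt$ is finite. The paper does not write this integral directly; instead it mollifies $c$ to $c_{1/n_k}\in C^1$, applies spectral cut-offs $E(\lambda)$ to reduce to bounded generators, derives the Gronwall/Cauchy estimate \eqref{yts} for $E(\lambda)\mathbf y_k$, and only then passes to the limits $k\to\infty$, $\lambda\to\infty$. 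Your route is cleaner once the measure Duhamel identity is justified for weak solutions (which you correctly flag as the delicate point); the paper's route is longer but makes that justification explicit and avoids any appeal to vector-measure integration.

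\textbf{The identity $c_*=c_\infty$.} Here the two arguments diverge. The paper proves time-averaged equipartition for $v$ with speed $c_*$ (formula \eqref{cinftyalphabeta}) and, separately, time-averaged equipartition for the asymptotic part $Y(t)\mathbf y_\infty$ of $u$ with speed $c_\infty$ (formula \eqref{w1w2}); since \eqref{limit} transfers the latter to $v$, comparison gives $c_*=c_\infty$. Your argument instead produces four scattering relations linking $h_\pm$ and $W^v_\pm(0)$, isolates the cross term $\operatorname{Re}\langle e^{-i\sigma_-(t)B}h_+,W^v_+(0)\rangle$, observes it has a limit, and kills that limit with the mean ergodic theorem (your $\sigma_-$ is eventually monotone onto a half-line, so convergence along $\sigma_-(t)$ is convergence as $s\to\pm\infty$, and the Ces\`aro mean of $s\mapsto\langle e^{-isB}h_+,W^v_+(0)\rangle$ vanishes because $\ker B=\{0\}$). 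The resulting quadratic identities sum to $\|h_+\|^2+\|h_-\|^2=(Q^2-P^2)\bigl(\|W^v_+(0)\|^2+\|W^v_-(0)\|^2\bigr)$ with $Q^2-P^2=-c_\infty/c_*<0$, forcing $v\equiv0$. Both proofs ultimately rest on the same spectral input (the paper's Lemma \ref{lem2}, equivalently von Neumann's mean ergodic theorem for $e^{isB}$ with $\ker B=\{0\}$), but your contradiction via the sign of $Q^2-P^2$ is more direct than the paper's two separate equipartition computations. One minor correction: your sentence ``$E(0)>0$ by \dots\ uniqueness for \eqref{we0}'' is unnecessary and not available at this generality; it suffices (as the paper does) to pick any $S$ with $E(S)>0$ and run Gronwall forward from $S$.

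\textbf{The final step.} Your argument that convergence of $e^{i\gamma(t)B}\zeta$ forces convergence of $\gamma$ via the intermediate value theorem and injectivity of $B$ is exactly the paper's argument (paragraphs following \eqref{y1infno0}), including the separate treatment of unboundedness and of $\liminf<\limsup$.
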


\begin{remark}
	If $b(t)$ is integrable and of bounded variation as well, the Cauchy problem \eqref{we0}--\eqref{initial0} is uniquely solvable. (See Proposition \ref{prop:global1} in Appendix.)
\end{remark}

\begin{remark}
Assume that the initial data $(\A^{1/2} \phi_0, \psi_0)$ belongs to $D(A^{J/2}) \times D(A^{J/2}) $ for $J \ge 1$, 
and  $u$ is a solution of \eqref{we0}--\eqref{initial0} in the sense that  
\begin{equation}\label{Jsol}
(\A^{1/2} u,   u^\prime) 
\in C \left([0,\infty); \HHH_{J} \times \HHH_J \right) \cap
AC_{\loc} \left([0,\infty); \HHH_{J-1/2} \times \HHH_{J-1/2} \right), 
\end{equation}
and that \eqref{WES} holds in $\HHH_{(J-1)/2} \times \HHH_{(J-1)/2}$ for almost every  $t \in (0,\infty)$. 
Then the solution $v$ of \eqref{fe0} given by (\rom 1) of Theorem 1 satisfies \eqref{Jsol} and   
\begin{equation}\label{uvdisJ}
\lim_{t \to \infty} \left( \Norm{\A^{1/2}(u(t) - v(t))}{{\HHH_{J/2}}} + \Norm{u^\prime(t) - v^\prime(t)}{{\HHH_{J/2}}} \right) = 0.    
\end{equation}
In fact, since  we see that \eqref{rest} in section 2 with $\Norm{\cdot}{{H \times H}}$ replaced by $\Norm{\cdot}{{\HHH_{J/2}  \times \HHH_{J/2} }}$ holds, 
we can prove \eqref{uvdisJ} in the same way as in the proof of Theorem 1 (\rom 1).    
\end{remark}

\section{Proof of Theorem \ref{maintheorem}}

We first give a lemma, which is employed in the proof of the equality $c_* = c_\infty$. 

\begin{lemma}\label{lem2}
	If $g(t)$ is of bounded variation on $[0,\infty)$, then 
	\[
	\lim_{T\to\infty}\left\|\frac{1}{T}\int_0^T g(t) \exp(iG(t) A^{1/2})u \,dt\right\| = 0
	\]
	for every $u \in H$, where $G(t) = \int_0^t g(s) ds$. 	
\end{lemma}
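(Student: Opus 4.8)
\emph{Proof sketch.}
The plan is to exploit that $t\mapsto \exp(iG(t)A^{1/2})$ is, in a suitable sense, differentiable with derivative $ig(t)A^{1/2}\exp(iG(t)A^{1/2})$, so that an ``integration by parts against $A^{-1/2}$'' converts the oscillatory average into a boundary term of size $O(1/T)$. First I would record the preliminaries. Since $g$ is of bounded variation on $[0,\infty)$ it is bounded, say $\sup_{t\ge 0}|g(t)| \le M < \infty$; hence $G$ is $M$-Lipschitz. Also $\{\exp(isA^{1/2})\}_{s\in\R}$ is the strongly continuous unitary group generated by the non-negative self-adjoint operator $A^{1/2}$, so $\|\exp(isA^{1/2})\| = 1$ for every $s$. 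Consequently the averages $S_T u := \frac1T\int_0^T g(t)\exp(iG(t)A^{1/2})u\,dt$ obey the uniform bound $\|S_T u\| \le M\|u\|$ for all $T > 0$ and $u\in H$; this replaces the Riemann--Lebesgue argument used in \cite{MR}.

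Next I would prove the conclusion for $u$ in the dense subspace $R(A^{1/2})$, which is dense because $A^{1/2}$ is injective and self-adjoint. Fix $w\in D(A^{1/2})$ and put $u = A^{1/2}w$. Using the spectral resolution $A^{1/2} = \int_0^\infty \lambda\,dE_\lambda$ and the representation $\exp(iG(t)A^{1/2})w = \int_0^\infty e^{iG(t)\lambda}\,dE_\lambda w$, one checks that $t\mapsto \exp(iG(t)A^{1/2})w$ is Lipschitz from $[0,\infty)$ into $H$, because $\|\exp(iG(t)A^{1/2})w - \exp(iG(s)A^{1/2})w\| \le |G(t)-G(s)|\,\|A^{1/2}w\|$, hence absolutely continuous; and since $G' = g$ a.e., the difference quotients of $e^{iG(t)\lambda}$ are bounded by $M\lambda$ and converge to $i\lambda g(t)e^{iG(t)\lambda}$ for a.e.\ $t$, so dominated convergence (with $\lambda\in L^2(d\|E_\lambda w\|^2)$ since $w \in D(A^{1/2})$) identifies the a.e.\ derivative as $ig(t)A^{1/2}\exp(iG(t)A^{1/2})w = ig(t)\exp(iG(t)A^{1/2})u$. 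The fundamental theorem of calculus for $H$-valued absolutely continuous functions, together with $G(0) = 0$, then yields
\[
\int_0^T g(t)\exp(iG(t)A^{1/2})u\,dt \;=\; -\,i\bigl(\exp(iG(T)A^{1/2})w - w\bigr),
\]
whence $\|S_T u\| \le 2\|w\|/T \to 0$ as $T\to\infty$.

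Finally I would remove the restriction on $u$ using the uniform bound from the first step: given $u\in H$ and $\vare > 0$, choose $u_0\in R(A^{1/2})$ with $\|u - u_0\| < \vare$; then $\|S_T u\| \le \|S_T(u-u_0)\| + \|S_T u_0\| \le M\vare + \|S_T u_0\|$, so $\limsup_{T\to\infty}\|S_T u\| \le M\vare$, and letting $\vare\to 0$ gives the claim. The one genuinely technical point is the middle paragraph --- verifying that $t\mapsto \exp(iG(t)A^{1/2})w$ is absolutely continuous with the stated a.e.\ derivative for $w\in D(A^{1/2})$; this is precisely where the bounded variation of $g$ is used, namely through boundedness of $g$ (equivalently, Lipschitz continuity of $G$), which simultaneously dominates the difference quotients and furnishes the uniform boundedness of the $S_T$. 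Everything else is routine.
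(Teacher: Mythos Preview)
Your proof is correct and follows essentially the same route as the paper: first establish the identity $\int_0^T g(t)\exp(iG(t)A^{1/2})A^{1/2}w\,dt = -i\bigl(\exp(iG(T)A^{1/2})w - w\bigr)$ for $w\in D(A^{1/2})$ by differentiating $t\mapsto \exp(iG(t)A^{1/2})w$, then extend to all $u\in H$ via density of $R(A^{1/2})$ and the uniform bound $\sup_t|g(t)|$. Your write-up simply supplies more detail (via the spectral resolution and dominated convergence) for the absolute continuity and a.e.\ derivative that the paper asserts without justification, and you correctly note that bounded variation enters only through boundedness of $g$.
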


\begin{proof}
Let $w $ be an arbitrary element of $D(A^{1/2})$.  Then,  $\exp(iG(t)A^{1/2})w$ is absolutely continuous on $[0,\infty)$ and differentiable almost everywhere on $(0,\infty)$, and thus we have
\begin{equation}\label{sg}
\frac{d}{dt}\exp(iG(t) A^{1/2})w = i g(t) \exp(iG(t)A^{1/2})A^{1/2}w.
\end{equation}
for almost every $t$ in $(0,\infty)$. 
Integrating \eqref{sg} on $(0,T)$, and dividing the equality by $T$, we have 
\[
\frac{1}{T}\int_0^T \exp(iG(t)A^{1/2})g(t)A^{1/2}w\,dt = \frac{\exp(i G(T) A^{1/2})w-w}{iT}.
\]
Since $\|\exp(i\tau(T)A^{1/2})w\| = \|w\|$ for every $T \in [0,\infty)$, 
we obtain
\[
\lim_{T\to\infty}\left\|\frac{1}{T}
\int_0^T \exp(iG(t)A^{1/2})g(t) A^{1/2}w\,dt\right\| = 0.
\]
Let $\delta > 0$ be an arbitrary positive number. 
The assumption that $A$ is an injective self-adjoint operator implies that the range of $A^{1/2}$ is dense in $H$. 
Thus, we can take   
$w \in D(A^{1/2})$ such that $\|u-A^{1/2}w\| < \delta$, 
and therefore we have 
\begin{align*}
&    \limsup_{T\to\infty}\left\|\frac{1}{T}\int_0^Tg(t) \exp(iG(t)A^{1/2}) u \,dt \right\|
\\
&\le \limsup_{T\to\infty}\left\|\frac{1}{T}\int_0^Tg(t) \exp(iG(t)A^{1/2})(u-A^{1/2}w)\,dt
\right\|
\\ & \qquad
+ \lim_{T\to\infty}\left\|\frac{1}{T}\int_0^T g(t) \exp(iG(t) A^{1/2})A^{1/2}w\,dt\right\|
\\
&\le \sup_{t \ge 0}\left(|g(t)| \big\|\exp(iG(t) A^{1/2})(u-A^{1/2}w)\big\|
\right)
\\
&\le  \delta \sup_{t \ge 0}|g(t)|.
\end{align*}
Since $\delta > 0$ is arbitrary, we obtain
\[
\lim_{T\to\infty}\left\|\frac{1}{T}\int_0^T  g(t) 
\exp(iG(t) A^{1/2})u\,dt \right\| = 0.
\]
\end{proof}

Now we prove Theorem 1. 
We express the solution $\mathbf{x}(t)$ of \eqref{WES} by the method of ordinary differential equation by  Wintner \cite{Wi} (see also Coddington and Levinson \cite{CL}, Hartman \cite{Ha}),   
similarly to the proof of Matsuyama \cite{Ma}.  
  Let
\begin{equation*}
Y(t):= 
\begin{pmatrix}
  e^{i \tau(t) A^{1/2}}                            
   &  e^{-i \tau(t)  A^{1/2} } \\
   i c(t)  e^{i \tau(t) A^{1/2} } &   - i c(t) e^{-i \tau(t) A^{1/2} }
\end{pmatrix},
\end{equation*}
where
\begin{equation*}
\tau(t) = \int_0^t c(s) ds.
\end{equation*}
Then
\begin{equation*}
Y(t)^{-1}= \frac{1}{2}
\begin{pmatrix}
  e^{-i \tau(t) A^{1/2}}  &  - \frac{i}{ c(t) } e^{-i \tau(t)  A^{1/2} } \\
 e^{i \tau(t)  A^{1/2}}  &    \frac{i}{ c(t) }  e^{i \tau(t) A^{1/2} } 
\end{pmatrix}.
\end{equation*}

In order to approximate $c$ by $C^1$ class functions, we use the mollifier as in the proof of Arosio \cite{A}. 
Let $\rho$ be a $C_0^\infty(\R)$ function with support contained in $[-1,1]$ and 
$\int_\R \rho(t) dt = 1$. 
Let $\delta$ be an arbitrary positive number. Put
$
\rho_\delta = \frac{1}{\delta}\rho(\frac{t}{\delta}),
$
and $c_\delta$ be the mollification of $c$, that is, 
$$
c_\delta(t) = \tilde{c} * \rho_\delta(t) = \int_{\R} \tilde{c}(t-s)\rho_\delta(s)ds (\in C^\infty(\R)),
$$
where $\tilde{c}$ is a extension of $c$ to $\R$ such that 
$\tilde{c}(t) = c(0)$ for $t < 0$. 
From the assumption that $c$ is bounded variation on $[0,\infty)$,  it follows that  
\begin{align}\label{c-cdelta}
\int_{S}^T |c(s) - c_\delta(s)|ds & \le 
\delta \Var(c;[\max \{S - \delta, 0 \},T+ \delta]),
\\
\label{cdelta_der}
\int_S^T |c_\delta^\prime(s)| ds & \le 
\Var(c;[\max \{S - \delta, 0 \},T+ \delta]),
\end{align}
for every $S,T \ge 0$ with $S < T$ (see \cite{CGS} and \cite{A}).  
Inequality \eqref{c-cdelta} with $\delta = 1/n$ implies  $\lim_{n \to \infty} c_{1/n} = c$ in $L^1((0,\infty))$. 
Thus, we can take a subsequence $\{ n_k \}_{k = 1}^\infty$ and a subset $N_1 \subset (0,\infty)$ such that the Lebesgue measure of $N_1$ is $0$ and that  
\begin{equation}\label{cn}
\lim_{k \to \infty} c_{1/n_k}(t) = c(t)
\end{equation}
for every $t \in (0,\infty) \setminus N_1$.   
Let
\begin{equation*}
Y_k(t):= 
\begin{pmatrix}
e^{i \tau(t) A^{1/2}}                             &  e^{-i \tau(t)  A^{1/2} } \\
i c_{{1/n_k}}(t)  e^{i \tau(t) A^{1/2} } &   - i c_{{1/n_k}}(t) e^{-i \tau(t) A^{1/2} }
\end{pmatrix}.
\end{equation*}
Then
\begin{equation*}
Y_k(t)^{-1}= \frac{1}{2}
\begin{pmatrix}
e^{-i \tau(t) A^{1/2}}  &  - \frac{i}{c_{{1/n_k}}(t) } e^{-i \tau(t)  A^{1/2} } \\
e^{i \tau(t)  A^{1/2}}  &    \frac{i}{c_{{1/n_k}}(t)} e^{i \tau(t)  A^{1/2} } 
\end{pmatrix},
\end{equation*}
and 
\begin{equation*}
\dfrac{d}{dt} Y_k(t)
+ \begin{pmatrix} 
0                          & - \frac{c(t)}{c_{{1/n_k}}(t)} A^{1/2} \\
c(t)c_{{1/n_k}}(t) A^{1/2} & -\frac{c_{{1/n_k}}^\prime(t)}{c_{{1/n_k}}(t)}    
\end{pmatrix}
Y_k(t)
=  
\begin{pmatrix} 
0    & 0 \\
0   & 0
\end{pmatrix}.
\end{equation*}
From \eqref{cass1} and \eqref{cn}, it follows that
\begin{align}\label{Ynlimit}
&\lim_{k \to \infty}\Norm{Y_k (t)^{-1} - Y(t)^{-1}}{{\LL(H \times H)}} = 0 
\; &\text{for every } \; t \in (0,\infty) \setminus N_1. 
\end{align} 

Let $\mathbf{x}(t) $ be a weak solution of \eqref{WES}--\eqref{initialS}. 
By putting
\begin{align*}
B_k(t) 
&=  Y_k(t)^{-1} 
\begin{pmatrix} 
0                          & \left(\frac{c(t)}{c_{{1/n_k}}(t)}- 1 \right) \A^{1/2} \\
c(t) (c(t) - c_{{1/n_k}}(t))\A^{1/2} &  b(t) + \frac{c_{{1/n_k}}^\prime(t)}{c_{{1/n_k}}(t)^2}    
\end{pmatrix}
Y_k(t), 
\end{align*}
$$
\mathbf{y}(t) = \begin{pmatrix}
 y_1(t) \\
 y_2(t)                                                                                              
\end{pmatrix}
:= Y(t)^{-1} \mathbf{x}(t),
$$
and
$$
\mathbf{y}_k(t) = \begin{pmatrix}
y_k^{(1)}(t) \\
y_k^{(2)}(t)                                                                                     
\end{pmatrix}
:= Y_k(t)^{-1} \mathbf{x}(t),
$$
\eqref{WES} is transformed into 
\begin{equation}
\label{WEST}
\dfrac{d}{dt} \mathbf{y}_k(t) + B_k(t) \mathbf{y}_k(t) = 0
\quad \text{in} \; \HHH_{-1/2} \times \HHH_{-1/2}. 
\end{equation}

Let $\{E(\lambda)\}$ be a spectral family associated with the self adjoint operator $A$. 
Then \eqref{WEST} yields
\begin{equation}\label{ylambdadelta_eq}
\dfrac{d}{dt} (E(\lambda)\mathbf{y}_k)(t) + B_{\lambda,k}(t) (E(\lambda)\mathbf{y}_k)(t) = 0  \;\; \text{in} \; H \times H, 
\end{equation}
for almost every $t \in  (0,\infty)$, where
\begin{align*}
&B_{\lambda,k}(t) 
\\
&=  Y_k(t)^{-1} 
\begin{pmatrix} 
0                                             
& \left(\frac{c(t)}{c_{{1/n_k}}(t)}- 1 \right) \A^{1/2}E(\lambda) 
\\
c(t) (c(t) - c_{{1/n_k}}(t))\A^{1/2}E(\lambda) 
& b(t)+\frac{c_{{1/n_k}}^\prime(t)}{c_{{1/n_k}}(t)^2}    
\end{pmatrix}
 Y_k(t).
\end{align*}
By \eqref{cass1} and the fact that $e^{\pm i s A^{1/2}}$ is unitary, 
the operators $Y_k(t)$ and $Y_k(t)^{-1}$ are bounded on $H \times H$ uniformly in $k$ and $t$.  
Thus, observing \eqref{cass1} again, we have a positive constant $K_1$ satisfying
\begin{equation}\label{Blambdadelta}
\Norm{B_{\lambda,k}(t)}{{\LL(H \times H)}} \le K_1 \left(  
 \lambda^{1/2} \left|c(t) - c_{{1/n_k}}(t) \right|
+ |c_{{1/n_k}}^\prime(t) |+ |b(t) |
\right)
\end{equation}
for every $\lambda, k > 0$ and every $t \ge 0$. 

We estimate  $(E(\lambda)\mathbf{y}_k)(t)$. 
The definition of weak solution implies $\mathbf{x}(t) \in AC_{\loc}([0,\infty); \HHH_{-1/2} \times \HHH_{-1/2})$, and therefore, $(E(\lambda)\mathbf{y}_k)(t) \in AC_{\loc}([0,\infty); H \times H)$.  
Thus, it follows from \eqref{ylambdadelta_eq} and \eqref{Blambdadelta} that 
\begin{equation}\label{Yts}
\begin{aligned}
&\Norm{(E(\lambda)\mathbf{y}_k)(t) - (E(\lambda)\mathbf{y}_k)(s)}
{{H \times H}} 
\\
&\le K_1 \int_s^t 
\left(
\lambda^{1/2} \left|c(\sigma) - c_{{1/n_k}}(\sigma) \right|  
+ |c_{{1/n_k}}^\prime(\sigma) |+ |b(\sigma) |
\right)
\Norm{(E(\lambda)\mathbf{y}_k)(\sigma) }{{H \times H}} d \sigma,
\end{aligned}
\end{equation}
for every $0 < s < t$. 
Thus
\begin{equation*}\begin{aligned}
&\Norm{(E(\lambda)\mathbf{y}_k)(t) }{{H \times H}} 
\le  
\Norm{(E(\lambda)\mathbf{y}_k)(0)}{{H \times H}} 
\\
&+  K_1 \int_0^t 
\left(
\lambda^{1/2}| c(\sigma) - c_{{1/n_k}}(\sigma) | + |c_{{1/n_k}}^\prime(\sigma) |+ |b(\sigma) |
\right)
\Norm{(E(\lambda)\mathbf{y}_k)(\sigma) }{{H \times H}} d \sigma,
\end{aligned}
\end{equation*}
for every $t \ge 0$. 
Hence by Gronwall's inequality together with the assumption that $b \in L^1((0,\infty))$, \eqref{c-cdelta} and \eqref{cdelta_der}, 
\begin{equation*}\begin{aligned}
&\Norm{(E(\lambda)\mathbf{y}_k)(t)}{{H \times H}} 
\\
&\le 
\exp\left(K_1 \left( ( \lambda^{1/2}/n_k + 1) \Var(c;[0,\infty)) + \Norm{b}{{L^1(0,\infty)}} \right) \right)
\Norm{(E(\lambda)\mathbf{y}_k)(0)}{{H \times H}}
\\
&\le 
 \exp\left(K_1 \left( (\lambda^{1/2}/n_k+ 1) \Var(c;[0,\infty)) + \Norm{b}{{L^1(0,\infty)}} \right) \right)
\Norm{\mathbf{y_k}(0)}{{H \times H}}
\\
\end{aligned}\end{equation*}
for every $t \ge 0$.   
Substituting this inequality into \eqref{Yts}, and observing  \eqref{c-cdelta} and \eqref{cdelta_der} again, we obtain
\begin{equation}\label{ylambdadeltats}
\begin{aligned}
&  \Norm{(E(\lambda)\mathbf{y}_k)(t) - (E(\lambda)\mathbf{y}_k)(s)}
{{H \times H}} 
\\
&\le 
K_1 
\left (  (  \lambda^{1/2}/n_k+ 1) \Var(c;[\max \{s-(1/n_k), 0 \}, t+(1/n_k)]) 
+ \Norm{b}{{L^1(s,t)}} \right)
\\
& \quad \times  
\exp\left(K_1 \left((\lambda^{1/2}/n_k + 1) \Var(c;[0,\infty)) + \Norm{b}{{L^1(0,\infty)}} \right)\right)
\Norm{\mathbf{y_k}(0)}{{H \times H}} 
\end{aligned}
\end{equation}
for every $0 \le s \le t$. 
From \eqref{Ynlimit}, it follows that
\begin{align*}
& \lim_{k \to \infty}\Norm{\mathbf{y}_{k}(t) - \mathbf{y}(t)}
{{H \times H}} = 0 
\; &\text{for every } \; t \in (0,\infty) \setminus N_1,
\end{align*}
and therefore
\begin{align*}
& \lim_{k \to \infty}\Norm{(E(\lambda)\mathbf{y}_{k})(t) - (E(\lambda)\mathbf{y})(t)}
{{H \times H}} = 0 
\quad 
\end{align*}
for every $s, t \in (0,\infty) \setminus N_1$ and $\lambda > 0$.  
Thus, letting $k \to \infty$ in \eqref{ylambdadeltats}, 
we obtain
 \begin{equation*}
\begin{aligned}
& \Norm{(E(\lambda)\mathbf{y})(t) 
 	-(E(\lambda)\mathbf{y})(s)}
 {{H \times H}} 
 \le
 K_1 \left ( \Var(c;[s-,t+]) + \Norm{b}{{L^1(s,t)}} \right)
 \\
 &  \qquad \qquad\qquad \qquad \quad \times 
  \exp\left(K_1  \Var(c;[0,\infty)) + \Norm{b}{{L^1(0,\infty)}} \right)
 \Norm{\mathbf{y}(0)}{{H \times H}} 
 \end{aligned}
 \end{equation*}
 for every $s, t \in (0,\infty) \setminus N_1$ and $\lambda > 0$, 
 where $ \Var(c;[s-,t+]) = \lim_{\delta \to  0+0}\Var(c;[s-\delta,t+\delta])$. 
Therefore we have
 \begin{equation}\label{yts}
 \begin{aligned}
 &\Norm{\mathbf{y}(t) - \mathbf{y}(s)}{{H \times H}} 
 \le K_1  \left ( \Var(c;[s-,t+]) + \Norm{b}{{L^1(s,t)}} \right)
 \\
 &  \qquad \qquad\qquad \qquad \quad \times  
 \exp\left(K_1  \Var(c;[0,\infty)) + \Norm{b}{{L^1(0,\infty)}} \right)
 \Norm{\mathbf{y}(0)}{{H \times H}} 
 \end{aligned}
 \end{equation} 
 for every $s, t \in (0,\infty) \setminus N_1$. 
 Since $c$ is of bounded variation on $[0,\infty)$, $\lim_{s,t \to \infty} \Var(c;[s-,t+]) = 0$. 
Hence, letting $s,t (\notin N_1)  \to \infty$ in \eqref{yts} implies the existence of the limit
 \begin{equation*}
 \lim_{t \notin N_1, t \to \infty}\mathbf{y}(t) 
= \mathbf{y}_\infty 
:= \begin{pmatrix} y_{1,\infty}\\
y_{2,\infty} \end{pmatrix}
\text{ in } H \times H.  
 \end{equation*}
Thus $\mathbf{y}(t)$ is expressed as 
\begin{equation*}
\mathbf{y}(t) = \mathbf{y}_\infty + \mathbf{r}(t),
\end{equation*}
with
\begin{equation}\label{rest}
\lim_{t \notin N_1, t \to \infty}\Norm{\mathbf{r}(t)}{{H \times H}} = 0.
\end{equation}
Hence we obtain the expression of the solution of \eqref{WE}
\begin{align}\label{uexp1}
\begin{pmatrix}
\A^{1/2}u(t) \\
        u^\prime(t)
\end{pmatrix}
& = \mathbf{x}(t) = Y(t) \mathbf{y}(t) 
=  Y(t) \mathbf{y}_\infty +  Y(t) \mathbf{r}(t)
\\
& = 
\begin{pmatrix}
  e^{i \tau(t) A^{1/2}} y_{1,\infty} + e^{-i \tau(t)  A^{1/2} } y_{2,\infty}  \\
   i c(t)  e^{i \tau(t) A^{1/2} } y_{1,\infty}   - i c(t) e^{-i \tau(t) A^{1/2}} y_{2,\infty}
\end{pmatrix}
+ Y(t) \rr(t).
\label{uexp}
\end{align}

Let $v$ be a solution of \eqref{fe}. 
Then it is expressed as 
\begin{equation}\label{vexp}\begin{aligned}
\mathbf{z}(t) = \begin{pmatrix}
\A^{1/2} v(t)  \\
 v^\prime(t)
\end{pmatrix}
& = 
\begin{pmatrix}
  e^{i  c_* t A^{1/2}} \phi + e^{-i c_* t  A^{1/2} } \psi  \\
   i c_* e^{i c_* t A^{1/2} } \phi  - i c_*  e^{-i c_* t A^{1/2}} \psi
\end{pmatrix},
\end{aligned}
\end{equation}
where
\begin{align*}
&\phi = \frac{1}{2}\left( \A^{1/2} v(0) - \frac{i}{c_*} v^\prime(0) \right), \quad 
\psi = \frac{1}{2}\left( \A^{1/2} v(0) + \frac{i}{c_*} v^\prime(0) \right).
\end{align*}
Since $\mathbf{x},\mathbf{z} \in C([0,\infty);H \times H)$, we easily see that 
$\lim_{t \to \infty} \Norm{\mathbf{x}(t) -\mathbf{z}(t)}{{H \times H}} = 0$ if and only if 
\begin{equation}\label{tGinfty}
\lim_{t \notin N_1, t \to \infty} 
\Norm{\mathbf{x}(t) -\mathbf{z}(t)}{{H \times H}} = 0. 
\end{equation}
Thus, the convergence \eqref{limit} holds if and only if \eqref{tGinfty} holds.  
By the expressions \eqref{uexp} and \eqref{vexp}, we see that \eqref{tGinfty} holds if and only if the following two convergences hold. 
\begin{align}\label{diss1}
&\lim_{t \notin N_1, t \to \infty} \norm{ e^{i \tau(t) A^{1/2}}  y_{1,\infty} + e^{-i \tau(t)  A^{1/2} } y_{2,\infty} 
-   e^{i c_* t A^{1/2}}  \phi   - e^{-i c_* t  A^{1/2} } \psi   } = 0,
\\
& \lim_{t \notin N_1, t \to \infty}  \norm{  c(t)  e^{i \tau(t) A^{1/2} } y_{1,\infty}   -  c(t) e^{-i \tau(t) A^{1/2}} y_{2,\infty}
\nonumber \\
&\qquad  \qquad \qquad \qquad 
-    c_* e^{i c_* t A^{1/2} } \phi  + c_*  e^{-i c_* t A^{1/2}} \psi }
= 0.
\label{diss2}
\end{align}

Here we prove the following lemma. 
\begin{lemma}\label{equiv}
Assume that $v$ is a weak solution of linear wave equation of \eqref{fe} with 
		\begin{equation}\label{cinfdef}
	c_* = c_\infty (= \lim_{t \to \infty} c(t)).
	\end{equation}
	Then the  convergence \eqref{limit} holds if and only if the following two convergences hold: 
	\begin{align}
		& \lim_{t \notin N_1, t \to \infty}  \norm{e^{i (\tau(t) - c_\infty t) A^{1/2}} y_{1,\infty} -    \phi  }
	= 0,
	\label{y1limit}
	\\
	& \lim_{t \notin N_1, t \to \infty} \norm{e^{-i (\tau(t) - c_\infty t) A^{1/2}}  y_{2,\infty} -  \psi  } 
	=  0. 
	\label{y2limit}
	\end{align}
	\end{lemma}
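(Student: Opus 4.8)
The plan is to build on the equivalences already established just before the statement: the convergence \eqref{limit} holds if and only if \eqref{tGinfty} holds, and by the expressions \eqref{uexp} and \eqref{vexp} this is in turn equivalent to the pair of convergences \eqref{diss1} and \eqref{diss2} (all limits being taken along $t \notin N_1$, $t \to \infty$). Hence it suffices to prove that, under the hypothesis \eqref{cinfdef}, the pair \eqref{diss1}, \eqref{diss2} is equivalent to the pair \eqref{y1limit}, \eqref{y2limit}.

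First I would simplify \eqref{diss2}. Since $e^{\pm i \tau(t) A^{1/2}}$ is unitary on $H$, the vectors $e^{\pm i \tau(t) A^{1/2}} y_{1,\infty}$ and $e^{\pm i \tau(t) A^{1/2}} y_{2,\infty}$ have $H$-norm bounded by $\norm{y_{1,\infty}} + \norm{y_{2,\infty}}$ uniformly in $t$. Because $c(t) \to c_\infty = c_*$, replacing $c(t)$ by $c_*$ in \eqref{diss2} alters the bracketed expression by a quantity of norm at most $|c(t) - c_*|\,(\norm{y_{1,\infty}} + \norm{y_{2,\infty}}) \to 0$. Thus \eqref{diss2} is equivalent to
\[
c_*\left( e^{i \tau(t) A^{1/2}} y_{1,\infty} - e^{-i \tau(t) A^{1/2}} y_{2,\infty} - e^{i c_* t A^{1/2}} \phi + e^{-i c_* t A^{1/2}} \psi \right) \longrightarrow 0,
\]
and, dividing by the positive constant $c_*$, to the same relation without the factor $c_*$; call it $(\ast)$.

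Next I would decouple the two frequency branches. Adding $(\ast)$ to \eqref{diss1} and dividing by $2$ gives $e^{i \tau(t) A^{1/2}} y_{1,\infty} - e^{i c_* t A^{1/2}} \phi \to 0$; subtracting $(\ast)$ from \eqref{diss1} and dividing by $2$ gives $e^{-i \tau(t) A^{1/2}} y_{2,\infty} - e^{-i c_* t A^{1/2}} \psi \to 0$. Conversely these two relations trivially imply both \eqref{diss1} and $(\ast)$, so the pair \eqref{diss1}, \eqref{diss2} is equivalent to this decoupled pair. Finally, since $e^{-i c_* t A^{1/2}}$ is unitary and, being a bounded Borel function of $A^{1/2}$, commutes with $e^{i \tau(t) A^{1/2}}$, applying it to $e^{i \tau(t) A^{1/2}} y_{1,\infty} - e^{i c_* t A^{1/2}} \phi \to 0$ turns the relation into $e^{i(\tau(t) - c_* t) A^{1/2}} y_{1,\infty} - \phi \to 0$; likewise applying $e^{i c_* t A^{1/2}}$ to the second relation yields $e^{-i(\tau(t) - c_* t) A^{1/2}} y_{2,\infty} - \psi \to 0$. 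Recalling $c_* = c_\infty$, these are exactly \eqref{y1limit} and \eqref{y2limit}, and every step above is reversible.

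The argument is entirely a sequence of unitary changes of variable together with the elementary limit $c(t) \to c_*$, so there is no genuine difficulty; the only point needing (routine) care is the uniform bound used to pass from $c(t)$ to $c_*$ in \eqref{diss2}, which is supplied by the unitarity of $e^{\pm i \tau(t) A^{1/2}}$ and by \eqref{cass1}.
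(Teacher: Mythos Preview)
Your proof is correct and follows essentially the same route as the paper: reduce \eqref{diss2} to the unscaled form by using $c(t)\to c_\infty=c_*$ together with the unitarity of $e^{\pm i\tau(t)A^{1/2}}$, then add and subtract with \eqref{diss1} to decouple the branches, and finally apply the unitary operators $e^{\mp i c_* t A^{1/2}}$ to obtain \eqref{y1limit}--\eqref{y2limit}. The paper compresses the first step into a single sentence, but the logic is identical.
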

	
	\begin{proof}
		By the argument above, the convergence \eqref{limit} holds if and only if \eqref{diss1} and \eqref{diss2} hold. 		
By the assumption \eqref{cinfdef} and the fact that $e^{i \tau(t) A^{1/2}}$ is a $C^0$ unitary group on $H$, we see that \eqref{diss2} holds if and only if the following convergence holds. 
\begin{align}
& \lim_{t \notin N_1, t \to \infty}  \norm{   e^{i \tau(t) A^{1/2} } y_{1,\infty}   -   e^{-i \tau(t) A^{1/2}} y_{2,\infty}
	-    e^{i c_\infty t A^{1/2} } \phi  +  e^{-i c_\infty t A^{1/2}} \psi }
= 0.
\label{diss3}
\end{align}
Hence, \eqref{limit} holds, if and only if \eqref{diss1} and  \eqref{diss3} hold, equivalently, the following two  convergences hold. 
\begin{align*}
& \lim_{t \notin N_1, t \to \infty} 
\norm{ e^{i \tau(t) A^{1/2}}  y_{1,\infty} 
	  	-   e^{i c_\infty t A^{1/2}}\phi  } 
= 0,
\\
&\lim_{t \notin N_1, t \to \infty} \norm{ e^{-i \tau(t) A^{1/2}}  y_{2,\infty} -   e^{-i c_\infty t A^{1/2}} \psi  } 
=  0. 
\end{align*}
Since $e^{i s A^{1/2}}$ is a unitary group on $H$, these convergences are equivalent to \eqref{y1limit} and \eqref{y2limit}. 
\end{proof}

Now we are ready to complete the proof of Theorem 1. 

{\bf Proof of (\rom 1)}. 
Assume that \eqref{c-cinf} holds. 
We take 
$c_*= c_\infty (= \lim_{t \to \infty} c(t))$,   
and
$$
\phi = e^{i \lim_{t \to \infty} (\tau(t) - c_\infty t) A^{1/2}} y_{1,\infty}, \quad 
\psi = e^{-i \lim_{t \to \infty} (\tau(t) - c_\infty t) A^{1/2}} y_{2,\infty}.
$$
Then by the strong continuity of the $e^{i s A^{1/2}}$ with respect to $s$ on $[0,\infty)$, the convergences \eqref{y1limit} and \eqref{y2limit} hold, and therefore \eqref{limit} holds by Lemma \ref{equiv}.  

{\bf Proof of (\rom 2)}.
Assume that there are a non-trivial solution $u$ of \eqref{we0},  a positive number $c_*$ and a  solution $v$ of \eqref{fe0} such that \eqref{limit} holds. 
Put
$$ 
F(t) := \frac{1}{2}\norms{u^\prime(t)} + \frac{1}{2}c(t)^2 \norms{\A^{1/2} u(t)}
$$
for every $t \ge 0$. 
Since $u$ is non-trivial and $\norms{u^\prime(t)} +  \norms{\A^{1/2} u(t)}$ is continuous, there is $S \in [0,\infty) \setminus N_1$ such that $\norms{u^\prime(S)} +  \norms{\A^{1/2} u(S)} > 0$.  Then by \eqref{cass1}, we have 
\begin{equation}\label{ES}
F(S) > 0. 
\end{equation}
For every $\lambda > 0$ and $\delta > 0$, we put $u_\lambda = E([0,\lambda))u$, 
\begin{align*}
F_{\lambda}(t) &:= \frac{1}{2}\norms{u_\lambda^\prime(t)} 
+ \frac{1}{2}c(t)^2 \norms{\A^{1/2} u_\lambda(t)}
\quad \text{for every}\;t \ge 0, 
\\
F_{\lambda, k}(t) &:= \frac{1}{2}\norms{u_\lambda^\prime(t)} 
+ \frac{1}{2}c_{{1/n_k}}(t)^2 \norms{\A^{1/2} u_\lambda(t)} 
\quad \text{for every}\; t \ge 0.
\end{align*} 
Since $u$ satisfies \eqref{we0} in $\HHH_{-1/2} 
\times\HHH_{-1/2}$ for almost every $t \in (0,\infty) $, 
$u_\lambda$ satisfies \eqref{we0} in $H \times H$ for almost every $t \in (0,\infty)$. Thus we have
\begin{align*}
F_{\lambda, k}^\prime(t) 
&= (c_{{1/n_k}}(t)^2  - c(t)^2)(u_\lambda^\prime(t), A^{1/2}\A^{1/2}u_\lambda(t))_H 
- b(t) \norms{u_\lambda^\prime(t)} 
\\
& \qquad +  c_{{1/n_k}}(t)c_{{1/n_k}}^\prime(t)\norms{\A^{1/2} u_\lambda(t)} 
\\
&\ge - \frac{1}{c_0}|c_{{1/n_k}}(t)^2  - c(t)^2| \sqrt{\lambda} F_{\lambda, k}(t)
- 2 \left(|b(t)| + \frac{|c_{{1/n_k}}^\prime(t)|}{c_{{1/n_k}}(t)} \right) F_{\lambda, k}(t)
\\
&\ge - \left( 
\frac{2 \sqrt{\lambda}}{c_0} (\sup_{t \ge 0} c(t)) |c_{{1/n_k}}(t)  - c(t)|
- 2 |b(t)| - 2 \frac{|c_{{1/n_k}}^\prime(t)|}{c_0} 
\right) F_{\lambda, k}(t),
\end{align*}
for almost every $t \in (0,\infty)$, where 
$c_0 = \inf_{t \ge 0}c(t) (> 0)$.  
Hence, observing \eqref{c-cdelta}, \eqref{cdelta_der} and the absolute continuity of $F_{\lambda, k}(t)$ with respect to $t$, 
we obtain
\begin{align*}
F_{\lambda, k}(t) 
\ge F_{\lambda, k}(S) \exp 
\Bigl(
-  &\frac{2 \sqrt{\lambda}}{c_0 n_k} \sup_{t \ge 0} c(t)  \Var(c;[0,\infty))
\\
&- 2 \Norm{b}{{L^1(0,\infty)}} - \frac{2}{c_0} \Var(c;[0,\infty)) 
\Bigr) 
\end{align*}
for every $t \ge S$. 
Letting $k \to \infty$ in the inequality above,  and observing \eqref{cn}, we obtain
$$
F_{\lambda}(t) \ge F_{\lambda}(S) \exp \left(
- 2 \Norm{b}{{L^1(0,\infty)}} - \frac{2}{c_0} \Var(c;[0,\infty)) \right),  
$$
for every $ t \ge S$ satisfying  $t \notin N_1$.  
Letting $\lambda \to \infty$ in the above inequality yields
$$
F(t) \ge F(S) \exp \left(
- 2 \Norm{b}{{L^1(0,\infty)}} - \frac{2}{c_0} \Var(c;[0,\infty)) \right),
$$
for every $ t \ge S$ satisfying  $t \notin N_1$, which together with \eqref{ES} implies that 
\begin{equation}\label{yinfno0}
( y_{1,\infty},  y_{2,\infty}) \neq (0,0). 
\end{equation}

We next prove
\begin{equation}\label{climit2}
c_* = c_\infty. 
\end{equation}
By the expression \eqref{vexp}, we have
\begin{align}
\label{vVnorm}
\norms{\A^{1/2} v(t)}
&= 
\norms{\phi} + \norms{\psi} 
+ 2 \operatorname{Re} (e^{2i c_* t A^{1/2}}\phi, \psi )_H
\\
\label{vprimeVnorm}
\norms{v^\prime(t)} 
&= c_* ^2   \Big(\norms{\phi} + \norms{\psi}
- 2 \operatorname{Re} (e^{2i c_* t A^{1/2}}\phi, \psi )_H
\Big).
\end{align}
By Lemma \ref{lem2} with $g(t) \equiv 2 c_*$, we have  
\begin{equation}\label{heikinproof}
\begin{aligned}
\left|\lim_{T\to\infty}\frac{1}{T}\int_0^T \bigl(e^{2i c_* t A^{1/2}}\phi, \psi \bigr) _H \,dt
\right|
&= \left|\left(\lim_{T\to\infty}\frac{1}{T}\int_0^T e^{2i c_* t A^{1/2}}\phi \,dt, \psi \right)_H \right|
\\
&\le \left\|\lim_{T\to\infty}\frac{1}{T}\int_0^T e^{2i c_* t A^{1/2}}\phi \,dt\right\|
\|\psi\|
= 0.
\end{aligned}
\end{equation}
Thus,
\begin{equation*}
\lim_{t \to \infty} \frac{1}{T} \int_0^T
(e^{2i c_* t A^{1/2}}\phi, \psi) _H dt = 0,
\end{equation*}
which together with \eqref{vVnorm} and \eqref{vprimeVnorm} yields
\begin{equation}\label{cinftyalphabeta}
c_*^2 
\lim_{t \to \infty} \frac{1}{T} \int_0^T \norms{\A^{1/2} v(t)} dt
= \lim_{t \to \infty} \frac{1}{T} \int_0^T \norms{v^\prime(t)} dt. 
\end{equation}

Put
\begin{equation*}
\begin{aligned}
\begin{pmatrix}
w_1(t) \\
w_2(t)
\end{pmatrix}
& :=Y(t) \mathbf{y}_\infty
= \begin{pmatrix}
e^{i \tau(t) A^{1/2}} y_{1,\infty} + e^{-i \tau(t)  A^{1/2} } y_{2,\infty}  \\
i c(t)  e^{i \tau(t) A^{1/2} } y_{1,\infty}   - i c(t) e^{-i \tau(t) A^{1/2}} y_{2,\infty}
\end{pmatrix}.
\end{aligned}
\end{equation*}
Then
\begin{align}
\label{wnorm}
\norms{w_1(t)}
&= 
\norms{y_{1,\infty}} + \norms{y_{2,\infty}} 
+ 2 \operatorname{Re} (e^{2i \tau(t) A^{1/2}}y_{1,\infty}, 
y_{2,\infty})_H,
\\
\label{w2norm}
\norms{w_2(t)} 
&= c(t)^2 
\Big(\norms{y_{1,\infty}} + \norms{y_{2,\infty}} 
- 2 \operatorname{Re} (e^{2i \tau(t) A^{1/2}}y_{1,\infty}, y_{2,\infty})_H \Big).
\end{align}
Using Lemma \ref{lem2} with $g(t) = 2c(t)$, we have in the same way as in 
\eqref{heikinproof},  
\begin{equation*}
\lim_{t \to \infty} \frac{1}{T} \int_0^T
(e^{2i \tau(t) A^{1/2}}y_{1,\infty}, y_{2,\infty})_H dt = 0.
\end{equation*}
Thus \eqref{wnorm}, \eqref{w2norm},  \eqref{yinfno0} and the 
convergence $c_\infty = \lim_{t \to \infty} c(t)$ 
yield
\begin{equation}\label{w1w2}
\begin{aligned}
 c_\infty^2 \lim_{T \to \infty} \frac{1}{T}\int_0^T \norms{w_1(t)}dt  
&= \lim_{T \to \infty} \frac{1}{T}\int_0^T \norms{w_2(t)}dt
\\
& = c_\infty^2
\Big(\norms{y_{1,\infty}} + \norms{y_{2,\infty}}  \Big) \neq 0.
\end{aligned} \end{equation} 
From the expression \eqref{uexp1} with \eqref{rest} and the boundedness of the operator $Y(t)$ uniformly to $t \ge 0$,  it follows that  
$$
\lim_{t \to \infty} \norm{\A^{1/2}u(t) - w_1(t)} + \lim_{t \to \infty}  \norm{u^\prime(t) - w_2(t)} = 0, 
$$
which together with \eqref{w1w2} yields
 \begin{equation*}\begin{aligned}
 c_\infty^2 \lim_{T \to \infty}\frac{1}{T}\int_0^T \norms{\A^{1/2} u(t)}dt  
 &= \lim_{T \to \infty} \frac{1}{T}\int_0^T \norms{u^\prime(t)}dt 
 \neq 0.
  \end{aligned}
 \end{equation*} 
 The equality above and  \eqref{limit} imply
 \begin{equation}\begin{aligned}\label{uc}
 c_\infty^2 \lim_{T \to \infty}\frac{1}{T}\int_0^T \norms{\A^{1/2} v(t)}dt  
 &= \lim_{T \to \infty} \frac{1}{T}\int_0^T \norms{v^\prime(t)}dt 
\neq 0. 
  \end{aligned}
 \end{equation} 
Comparing \eqref{cinftyalphabeta} and \eqref{uc}, we obtain  \eqref{climit2}. 

Now we prove \eqref{c-cinf} under the assumption 
\begin{equation}\label{y1infno0}
y_{1,\infty} \neq 0.
\end{equation} 
The case $y_{1,\infty} =0$ and $y_{2,\infty} \neq 0$ can be treated in the same way. 
Put
$$
f(t) := \int_0^t (c(s) - c_\infty)ds = \tau(t) - c_\infty t \quad \text{for}\quad  t \ge 0.
$$
Then $f \in C([0,\infty))$. We put
$$
\alpha = \liminf_{t \to \infty} f(t), \; 
\beta = \limsup_{t \to \infty}f(t) \; (\in [-\infty,\infty] ).  
$$
It suffices to show  
\begin{equation}\label{conclusion}
\alpha = \beta \in R.
\end{equation}

First we show that $\beta < \infty$. 
Suppose that $\beta = \infty$. 
Since $f$ is continuous and Lebesgue measure of $N_1$ is zero, we can take sequences $\{ t_k \}_{k \in \N}$ such that 
\begin{equation*}
\begin{aligned}
&t_k \notin N_1, \quad \lim_{k \to \infty} t_k  = \infty, 
\quad  \lim_{k \to \infty} f(t_k) = \infty.
\end{aligned}\end{equation*}     
Let $\gamma$ be an arbitrary positive number. 
For every $k \in \N$, since 
$\lim_{n \to \infty}f(t_{k+n}) = \infty$, 
the intermediate value theorem implies that there is $s_k > t_k$ satisfying
$$
f(s_k) = f(t_k) + \gamma.
$$
By using the continuity of $f$ at $s_k$ and the fact that measure of $N_1$ is zero, we can take $r_k$ such that 
\begin{equation}
\label{str}
r_k \notin N_1 \quad r_k > t_k, \quad | f(r_k) - f(t_k) - \gamma| = | f(r_k) - f(s_k)| < \frac{1}{k}.
\end{equation}
By \eqref{climit2}, Lemma \ref{equiv} yields \eqref{y1limit}. This implies
$$
\lim_{t \notin N_1, t \to \infty} e^{{-i f(t) A^{1/2}}} \phi = 
\lim_{t \notin N_1, t \to \infty} e^{{-i (\tau(t) - c_\infty t) A^{1/2}}} \phi  = y_{1,\infty} \quad \text{in} \quad H,
$$
since $e^{i t A^{1/2}}$ is a unitary operator on $H$. 
Hence, letting $k \to \infty$ in the equality 
\begin{equation*}
e^{i  (f(r_k) -f(t_k) - \gamma) A^{1/2}}
e^{-i  f(r_k) A^{1/2}} 
\phi 
= e^{-i \gamma A^{1/2}} e^{- i f(t_k) A^{1/2}} \phi, 
\end{equation*}
and observing \eqref{str} and the continuity of the unitary operator $e^{is A^{1/2}}$ with respect to $s$, we obtain
$$
y_{1,\infty} = e^{-i \gamma A^{1/2}} y_{1,\infty}. 
$$
Thus, we have
\begin{equation*}
(I + A^{1/2})^{-1} y_{1,\infty} = e^{-i \gamma A^{1/2}} (I + A^{1/2})^{-1} y_{1,\infty}.
\end{equation*} 
Since $\gamma > 0$ is arbitrary, and since 
$(I + A^{1/2})^{-1}y_{1,\infty} \in D(A^{1/2})$, we differentiate the equality above with respect to $\gamma$ to obtain 
$$
0 = 
\od{}{\gamma} e^{-i \gamma A^{1/2}} (I + A^{1/2})^{-1} y_{1,\infty}
= - i  A^{1/2} e^{-i \gamma A^{1/2}} (I + A^{1/2})^{-1} y_{1,\infty}
$$
on $(0, \infty)$. This implies that $y_{1,\infty} = 0$ by the injectivity of $A^{1/2}$ and $e^{-i \gamma A^{1/2}}$,   
which contradicts \eqref{y1infno0}.  
                                                       
The assumption $\alpha =-\infty$ deduces contradiction in the same way. 

We finally prove \eqref{conclusion}. 
The above facts imply that $\alpha, \beta \in \R$. 
Suppose that \eqref{conclusion} fails to hold. Then 
the interval $(\alpha, \beta)$ is not emplty.   
Let $\gamma $ be an arbitrary number $\gamma \in (\alpha, \beta)$. 
For every $k \in \N$,  the intermediate value theorem implies that there exists $s_k > k$ satisfying 
$f(s_k) = \gamma$. 
Then by the same reason as \eqref{str}, we can take $r_k$ such that 
\begin{equation}
\label{rk}
r_k \notin N_1 \quad r_k > k, \quad | f(r_k) - \gamma| = | f(r_k) - f(s_k)| < \frac{1}{k}.
\end{equation}
Letting $k \to \infty$ in the equality
\begin{equation*}
e^{-i  (f(r_k)  - \gamma) A^{1/2}}
e^{i  (\tau(r_k) - c_\infty r_k) A^{1/2}} 
y_{1,\infty} 
= e^{i \gamma A^{1/2}} y_{1,\infty}, 
\end{equation*}
and observing \eqref{y1limit}, \eqref{rk} and the continuity of $e^{i t A^{1/2}}$ with respect to $t$,  we obtain\begin{equation*}
\phi = e^{i \gamma A^{1/2}} y_{1,\infty}.
\end{equation*} 
Hence we have 
\begin{equation}\label{gamma1}
 (I + A^{1/2})^{-1} \phi = e^{i \gamma A^{1/2}} (I + A^{1/2})^{-1}y_{1,\infty}. 
\end{equation}  
Since $\gamma \in  (\alpha, \beta)$ is arbitrary and since $(I + A^{1/2})^{-1}H \subset D(A^{1/2})$, 
we differentiate \eqref{gamma1} with respect to $\gamma $ to obtain 
$$
i A^{1/2} e^{i \gamma A^{1/2}} (I + A^{1/2})^{-1} y_{1,\infty} 
= \od{}{\gamma} e^{i \gamma A^{1/2}} (I + A^{1/2})^{-1} y_{1,\infty} = 0
$$
on $(\alpha, \beta)$. This implies that $y_{1,\infty} = 0$ by the injectivity of $A^{1/2}$ and 
$e^{i \gamma A^{1/2}}$,  which contradicts to \eqref{y1infno0}.  
\hfill \qedsymbol

\section{ Appendix}

In the case $b(t)$ is an integrable $C^1$ function and $c$ is a $C^1$ function satisfying \eqref{cass1}, it is clear that there exists a unique solution of initial value problem \eqref{WES}--\eqref{initialS}, equivalently, 
\eqref{we0}--\eqref{initial0}. Namely, the following proposition holds. 

\begin{propA} 
Let $b(t)$ be an integrable $C^1$ function on $[0,\infty) $. 
Let $c(t)$ be a $C^1$ function satisfying \eqref{cass1}. 
Then for every $(\phi_0, \psi_0) \in \HH_{1/2} \times H$,  
the Cauchy problem \eqref{we0}--\eqref{initial0} has a unique global weak solution.  
Furthermore, if $(\A^{1/2}\phi_0, \psi_0) \in D(A^{J/2}) \times D(A^{J/2})$ for $J \ge 1$, the following holds. 
\begin{align*}
&(\A^{1/2} u,   u^\prime) \in \bigcap_{j = 0,1} C^j \left( [0,\infty); \HHH_{(J-j)/2} \times \HHH_{(J-j)/2} \right).
\end{align*}
\end{propA}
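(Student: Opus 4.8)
The plan is to reduce \eqref{we0}--\eqref{initial0} to the first-order system \eqref{WES}--\eqref{initialS} and to eliminate the unbounded part of the generator by the substitution $\mathbf{y}(t)=Y(t)^{-1}\mathbf{x}(t)$, exactly as in Section 3 but now \emph{without} mollification: since $c\in C^1$ and \eqref{cass1} hold, $Y(t)$ is strongly $C^1$ as a map from $H\times H$ into $\HHH_{-1/2}\times\HHH_{-1/2}$ and satisfies the analogue for $Y$ of the differential equation satisfied by $Y_k$ in Section 3 (with $c_{1/n_k}$ replaced by $c$), namely $Y'(t)=-\begin{pmatrix}0&-A^{1/2}\\ c(t)^2A^{1/2}&-c'(t)/c(t)\end{pmatrix}Y(t)$. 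The same computation that produced \eqref{WEST} then shows that $\mathbf{x}$ is a weak solution of \eqref{WES}--\eqref{initialS} if and only if $\mathbf{y}$ solves
\[
\mathbf{y}'(t)+B(t)\mathbf{y}(t)=0,\qquad \mathbf{y}(0)=Y(0)^{-1}\begin{pmatrix}\A^{1/2}\phi_0\\ \psi_0\end{pmatrix},
\]
where, since the off-diagonal entries of the middle matrix in $B_k$ vanish when $c_{1/n_k}=c$, one obtains $B(t)=Y(t)^{-1}\begin{pmatrix}0&0\\ 0&\beta(t)\end{pmatrix}Y(t)$ with $\beta(t)=b(t)+c'(t)/c(t)$ a continuous function on $[0,\infty)$. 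The decisive gain is that $B(t)$ is a \emph{bounded} operator on $H\times H$: since $e^{\pm i\tau(t)A^{1/2}}$ is unitary and $\inf_{t\ge0}c(t)>0$, the operators $Y(t)$ and $Y(t)^{-1}$ are bounded on $H\times H$ uniformly in $t$, whence $t\mapsto\|B(t)\|_{\LL(H\times H)}$ is continuous, hence locally integrable.

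First I would solve the conjugated Cauchy problem. As $B(\cdot)$ is strongly continuous and locally norm-bounded on $H\times H$, the classical theory of linear ordinary differential equations in a Banach space (Wintner's method; see \cite{Wi}, \cite{CL}, \cite{Ha}) gives a unique global $\mathbf{y}\in C^1([0,\infty);H\times H)$, obtained as the locally uniformly convergent Picard iteration with initial term $\mathbf{y}(0)$. Setting $\mathbf{x}=Y\mathbf{y}$ and writing $\mathbf{x}=(w,z)$, a direct computation using the differential equation for $Y$ shows that $\mathbf{x}\in C([0,\infty);H\times H)\cap AC_{\loc}([0,\infty);\HHH_{-1/2}\times\HHH_{-1/2})$ and that \eqref{WES} holds in $\HHH_{-1/2}\times\HHH_{-1/2}$ for almost every $t$, so that $u=\A^{-1/2}w$ is a weak solution of \eqref{we0}--\eqref{initial0}. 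For uniqueness, starting from an arbitrary weak solution $\mathbf{x}$ of \eqref{WES}--\eqref{initialS}, the product rule (justified as part of Wintner's reduction, because $Y^{-1}$ is strongly $C^1$ and $\mathbf{x}$ is $AC_{\loc}$) yields $\mathbf{y}=Y^{-1}\mathbf{x}$ solving the conjugated problem; since $B(t)\mathbf{y}(t)\in H\times H$, in fact $\mathbf{y}\in AC_{\loc}([0,\infty);H\times H)$, and Gronwall's inequality identifies $\mathbf{y}$ with the Picard solution, hence $\mathbf{x}$, hence $u$, with the solution constructed above.

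For the regularity statement the key observation is that $Y(t)$, $Y(t)^{-1}$ and $B(t)$ are assembled from scalar functions of $t$ and from the unitary operators $e^{\pm i\tau(t)A^{1/2}}$, all of which commute with $A^{\alpha}$; hence each of them maps $\HHH_{\alpha}\times\HHH_{\alpha}$ boundedly into itself, uniformly on compact time intervals, for every $\alpha\in\R$. If $(\A^{1/2}\phi_0,\psi_0)\in D(A^{J/2})\times D(A^{J/2})=\HHH_{J/2}\times\HHH_{J/2}$, then $\mathbf{y}(0)\in\HHH_{J/2}\times\HHH_{J/2}$, and running the Picard iteration there gives $\mathbf{y}\in C^1([0,\infty);\HHH_{J/2}\times\HHH_{J/2})$, so that $\mathbf{x}=Y\mathbf{y}\in C([0,\infty);\HHH_{J/2}\times\HHH_{J/2})$. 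Differentiating $\mathbf{x}=Y\mathbf{y}$ and inserting the equation for $Y$ together with $\mathbf{y}'=-B\mathbf{y}$, one finds that $\mathbf{x}'(t)$ equals a fixed first-order matrix differential operator in $A^{1/2}$ applied to $\mathbf{x}(t)$, plus $-Y(t)B(t)\mathbf{y}(t)$; the former lies continuously in $\HHH_{(J-1)/2}\times\HHH_{(J-1)/2}$ — this single $1/2$-loss of regularity coming from the $A^{1/2}$ of the first-order reduction — and the latter lies continuously in $\HHH_{J/2}\times\HHH_{J/2}$. Recalling $\mathbf{x}=(\A^{1/2}u,u')$, this is precisely the assertion $(\A^{1/2}u,u')\in\bigcap_{j=0,1}C^j([0,\infty);\HHH_{(J-j)/2}\times\HHH_{(J-j)/2})$.

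The only genuinely delicate step, and the one I expect to be the main obstacle, is the equivalence between ``$\mathbf{x}$ is a weak solution of \eqref{WES}'' and ``$\mathbf{y}=Y^{-1}\mathbf{x}$ solves $\mathbf{y}'+B\mathbf{y}=0$'', i.e. the justification of differentiating the product $Y(t)^{-1}\mathbf{x}(t)$ when $Y^{-1}$ is only strongly (not norm) differentiable and $\mathbf{x}$ is only $AC_{\loc}$ valued in $\HHH_{-1/2}\times\HHH_{-1/2}$; this is exactly the reduction to ordinary differential equations employed in Section 3, so in the write-up I would simply refer to that argument. Everything else is routine linear-ordinary-differential-equation theory together with bookkeeping on the scale $\{\HHH_\alpha\}_{\alpha\in\R}$.
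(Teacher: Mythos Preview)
The paper does not actually prove Proposition~A: immediately before the statement it declares the result ``clear'' and moves on, reserving the work in the Appendix (Theorem~B, Lemma~\ref{absolute}, Proposition~\ref{prop:global1}) for the harder case where $c$ is merely of bounded variation. Your proposal is a correct and natural way to substantiate that ``clear'', and it is precisely the Wintner reduction of Section~3 specialized to $c\in C^1$: with no mollification needed, the off-diagonal $A^{1/2}$-entries of the conjugated coefficient vanish and one is left with a bounded, strongly continuous $B(t)$, so Picard iteration in $H\times H$ (and, for the regularity part, in $\HHH_{J/2}\times\HHH_{J/2}$) does the job. Two minor remarks: first, comparing with the paper's $B_k$, the lower-right entry should read $b(t)+c'(t)/c(t)$ as you wrote (the paper's displayed $c_{1/n_k}'(t)/c_{1/n_k}(t)^2$ appears to be a misprint, irrelevant to the argument); second, the ``delicate step'' you flag --- differentiating $Y^{-1}\mathbf{x}$ --- is here entirely routine because $c\in C^1$ makes $t\mapsto Y(t)^{-1}$ strongly $C^1$ from $H\times H$ to $\HHH_{-1/2}\times\HHH_{-1/2}$, so the Leibniz rule applies directly; the subtlety in Section~3 arose only from the lack of differentiability of $c$.
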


On the existence of solutions of the Cauchy problem \eqref{WE}--\eqref{initial} under the assumption that $c(t)$ is of bounded variation, there are some results. 
Colombini, De Giorgi and Spagnolo \cite{CGS} showed the existence of solution 
\begin{alignat*}{2} 
&\pd{^2 u}{t^2}(t,x)  - \sum_{i,j= 1}^n 
a_{i,j}(t)\pd{^2 u}{x_i \partial x_j} 
 = f(t,x) 
&&
\text{ in  }[0,\infty) \times \R^n,
\end{alignat*}
in the class 
$u \in C([0,T], H^{s+1}_{\loc})$,  $\partial u/\partial t \in  L^2([0,T],H^s_{\loc}) $ and 
$\partial^2 u/\partial t^2 \in L^1([0,T],H^{s-1}_{\loc})$, 
where $a_{i,j}(t)$ is of bounded variation and 
$$
a_{i,j}(t) = a_{j,i}(t), \quad 
\sum_{i,j= 1}^n a_{i,j}(t) \xi_i \xi_j \ge \lambda_0 |\xi|^2 
\; \text{for all } \xi \in \R^n,
$$  
for $\lambda_0 > 0$. In the case $A$ is a corecive self-adjoint operator, De Simon and Torelli 
\cite{STo} showed the unique existence of the solution 
of \eqref{WE}--\eqref{initial} in the class 
$u \in W^{1,2}([0,T], H),  \; \partial u/\partial t \in L^2([0,T], D(A^{1/2})) $. 
Arosio \cite{A} considered  
\eqref{wave1}--\eqref{wave3}  with $(\phi_0, \psi_0) \in H^{1}_0(\Omega) \times L^2(\Omega)$ for bounded domain $\Omega$,  and 
 showed the unique existence of solution 
 in the class $u \in C([0,\infty), H^{1}_0(\Omega)) \cap C^1([0,\infty),L^2(\Omega))$. 
 The results above (\cite{CGS}, \cite{STo} and \cite{A}) considered the solutions in the sense of  distribution with respect to $t$. 
 On the other hand, B\'{a}rta \cite[section 2]{Ba} considered the hyperbolic equation 
\begin{alignat}{2}	\label{baeq}
&\pd{^2 u}{t^2} = \sum_{i,j = 1}^n \pd{}{x_i} \left(a_{i,j}(t,x)\pd{u}{x_j} \right)(t,x)
&&+ \sum_{i = 1}^n p_i(t,x)  \pd{u}{x_i}(t,x) + q(t,x) u(t,x)
\nonumber
\\
&  
\qquad \quad \qquad  \qquad 
&&\text{ in } (0,T) \times \Omega,
\\
	\label{baerinitial}
&	u(0,x) = \phi_0(x), \quad \pd{u}{t}(0,x) = \psi_0(x) \; 
&&	\text{ in } \Omega, 
\end{alignat}
where $\Omega$ is a bounded domain in $\R^n$, 
and $a_{i,j}$, $p_i$ and $q$ are functions satisfying the following:
\begin{align*}
&a_{i,j} \in BV([0,\infty), W^{1,\infty}) \cap L^\infty([0,\infty),Lip(\Omega)), 
\\
&a_{i,j}(t,x) = a_{j.i}(t,x), \quad 
\sum_{i,j= 1}^n a_{i,j}(t,x) \xi_i \xi_j \ge \lambda_0 |\xi|^2 
\; \text{for all } \xi \in \R^n,
\\
&p_i, q \in BV([0,\infty), L^{\infty}).   
\end{align*}
Then he showed the unique existence of the solution 
	$u(t) \in C([0,\infty);H^1_0(\Omega)) \cap C^1([0,\infty);L^2(\Omega))$ of  \eqref{baeq} with initial value in 
	$(H^2(\Omega) \cap H^1_0(\Omega) ) \times H^1_0(\Omega)$, such that for an at most countable subset $N$,  
		$$
		(u(t), u^\prime(t)) \in C([0,\infty) \setminus N; (H^2(\Omega) \cap H^1_0(\Omega)) \times H^1_0(\Omega) ),
		$$	
and $u^\prime(t)$ is differentiable with values in $L^2(\Omega)$ at $t \in [0,\infty) \setminus N$.  
	B\'{a}rta \cite{Ba} proved this by showing and applying an abstract theorem. 
	
	\begin{themB}[Proposition 1.3 and Corollary 1.4 of \cite{Ba}]
	Let $X$, $Y$ be uniformly convex Banach space. 
	Let $\{ \mathbb A(t) \}_{t \ge 0}$ be the family of closed operators in $X$ with domain $D(\mathbb A(t)) \equiv Y$.  
	Assume that the following conditions {\rm(\rom 1)--(\rom3)} hold. 
	\begin{enumerate}
		\item For every $t \ge 0$,  $D(\mathbb A(t))$ is dense in $X$, and $\{ \mathbb A(t) \}_{t \ge 0}$  is stable with constants $\beta, 1$, that is, the semi-infinite interval $(\beta, \infty)$ belongs to the resolvent set of $- \mathbb A(t)$ and 
	$$
		\Norm{(\mathbb A(t) + \xi)^{-1}}{{\LL(X)}} \le (\xi - \beta)^{-1}, \quad \xi > \beta, 
		$$		
		for every $t \ge 0$. 		
		\item There exists a family of uniformly convex Banach spaces $X_t = (X, \Norm{\cdot}{t})$ and a function of bounded variation $a:[0,\infty)\to \R $ such that
		\begin{equation*}
		\frac{\Norm{x}{t}}{\Norm{x}{s}} \le e^{|a(t) - a(s)|}
		\end{equation*}
		holds for all $x \in X$ and $0 \le s,t \le T$.
		\item The mapping $t \mapsto \mathbb A(t)$ is of bounded variation with values in $B(Y,X)$. 
			\end{enumerate}
Then there exists a family operators $U(t,s) \in B(X)$, $(t,s) \in \Delta=\{(t,s) \in \R^2; 0 \le s \le t \le T \}$ such that the following {\rm(a)--(c)} hold. 

\renewcommand{\labelenumi}{(\alph{enumi})}
\begin{enumerate}
				\item $U(t,s)$ is strongly continuous in $X$ with respect to $s,t$, $U(t,t) = I$ and $\Norm{U(t,s)}{X} \le e^{\beta(t-s)}$.
		\item $U(t,s)Y \subset Y$ and $\Norm{U(t,s)}{Y} \le e^{\beta(t-s)}$.
		\item For every $y \in Y$, 
		there exists a countable set $N_{y} \subset [0,\infty)$ such that 
		the mapping
		$t \mapsto U(t,s)y$ is continuous 
		in the norm of $Y$, and that $D_t U(t,s)y = - \mathbb A(t)U(t,s)y$ holds for all  $(t,s) \in \Delta$,  
$ t \notin N_{y}$.
	\end{enumerate}
	\end{themB}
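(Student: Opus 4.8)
The plan is to construct the propagator $U(t,s)$ by Kato's approximation scheme for hyperbolic-type evolution equations, in the form adapted to time-dependent generators of bounded variation carried out in \cite{Ba}; only the structure is sketched here. Fix $T>0$. First I would note that hypothesis (\rom 1), via the Hille--Yosida theorem, gives for each frozen time $r\in[0,T]$ a $C_0$-semigroup $e^{-\sigma\mathbb A(r)}$ on $X$ with $\|e^{-\sigma\mathbb A(r)}\|_{\LL(X)}\le e^{\beta\sigma}$; since $Y=D(\mathbb A(r))$, this semigroup leaves $Y$ invariant. Hypothesis (\rom 2) (the variable norms $\|\cdot\|_r$ and the bounded-variation function $a$) is what keeps the $Y$-growth of these semigroups, and of their products, under control as $r$ varies, while hypothesis (\rom 3) is what will make the approximation converge.

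Next I would introduce the piecewise-constant approximants. For a partition $\pi$ of $[0,T]$, freeze $\mathbb A$ at the left endpoint on each subinterval to get $\mathbb A_\pi(\cdot)$, and define $U_\pi(t,s)$ for $0\le s\le t\le T$ by composing the corresponding semigroups over the subintervals between $s$ and $t$. These satisfy $U_\pi(t,t)=I$, the two-parameter semigroup law, $\|U_\pi(t,s)\|_{\LL(X)}\le e^{\beta(t-s)}$, and $U_\pi(t,s)Y\subset Y$ with a $Y$-bound of the form $e^{(\beta+\Var(a;[0,T]))(t-s)}$, all uniformly in $\pi$. Then, along partitions $\pi_n$ with mesh tending to $0$, for $y\in Y$ I would use the Duhamel-type identity $U_{\pi_m}(t,s)y-U_{\pi_n}(t,s)y=\int_s^t U_{\pi_n}(t,\sigma)(\mathbb A_{\pi_n}(\sigma)-\mathbb A_{\pi_m}(\sigma))U_{\pi_m}(\sigma,s)y\,d\sigma$ and bound the right-hand side in $X$ by the uniform estimates together with hypothesis (\rom 3): bounded variation of $t\mapsto\mathbb A(t)$ in $B(Y,X)$ forces $\int_0^T\|\mathbb A_{\pi_m}(\sigma)-\mathbb A_{\pi_n}(\sigma)\|_{B(Y,X)}\,d\sigma\to0$. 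Hence $\{U_{\pi_n}(t,s)y\}$ is Cauchy in $X$; by density of $Y$ in $X$ and the uniform $X$-bound it extends to $U(t,s)\in B(X)$ with the bound in (a), and equicontinuity of $\{U_{\pi_n}\}$ in $(t,s)$ together with pointwise convergence on $Y$ gives strong continuity, completing (a).

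For (b) I would exploit uniform convexity: the $Y$-bounded sequence $U_{\pi_n}(t,s)y$ has a weakly convergent subsequence in the reflexive space $Y$, whose limit must coincide with the $X$-limit $U(t,s)y$; hence $U(t,s)y\in Y$, and the sharp bound $\|U(t,s)\|_{\LL(Y)}\le e^{\beta(t-s)}$ follows by passing the estimate to the limit and absorbing the $\Var(a;[0,T])$ factor through the variable norms of (\rom 2), as in \cite{Ba}. For (c) I would take $N_y$ to be the (at most countable) set $N$ of discontinuity points of the bounded-variation map $t\mapsto\mathbb A(t)\in B(Y,X)$. For $t\notin N$, passing the Duhamel identity to the limit and using continuity of $\mathbb A(\cdot)$ at $t$ together with $Y$-continuity of $\sigma\mapsto U(\sigma,s)y$ off $N$ — which comes from (b), reflexivity, and continuity of $\sigma\mapsto\|U(\sigma,s)y\|_Y$, since weak convergence plus norm convergence is strong convergence in a uniformly convex space — I would differentiate to obtain $D_tU(t,s)y=-\mathbb A(t)U(t,s)y$.

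I expect the main obstacle to be the handling of the space $Y$ throughout: deriving the uniform $Y$-bounds on the approximants with the correct exponential constant, upgrading the weak $Y$-limit to a strong one, and identifying the exceptional set precisely with the jump set of $\mathbb A(\cdot)$. This is exactly where the bounded-variation hypothesis (\rom 3) and the variable-norm / uniform-convexity hypothesis (\rom 2) must be used in tandem, and where the passage from the Lipschitz (or $C^1$) theory to the bounded-variation theory is genuinely delicate.
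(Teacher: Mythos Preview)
The paper does not prove Theorem~B at all: it is quoted verbatim as ``Proposition~1.3 and Corollary~1.4 of \cite{Ba}'' and used as a black box in the proof of Proposition~\ref{prop:global1}. There is therefore no ``paper's own proof'' to compare your proposal against.

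That said, your sketch is a reasonable outline of the Kato-type approximation scheme that underlies results of this kind, and it is essentially the route taken in \cite{Ba}. A few points where your outline is loose relative to what the actual argument requires: the uniform $Y$-bound on the piecewise-constant approximants $U_\pi$ is not $e^{(\beta+\Var(a;[0,T]))(t-s)}$ but rather involves the variation of $a$ over $[s,t]$ additively in the exponent, and getting the sharp constant $e^{\beta(t-s)}$ in~(b) in the limit is where the variable norms of~(\rom2) are really used (not merely ``absorbed''); the exceptional set $N_y$ in~(c) may in principle depend on $y$, not just on the jump set of $\mathbb A(\cdot)$, though in practice the jump set suffices; and the $Y$-continuity argument you sketch (weak convergence plus norm convergence implies strong convergence in a uniformly convex space) requires first establishing that $t\mapsto\|U(t,s)y\|_Y$ is continuous off $N$, which is itself a nontrivial step tied to the bounded-variation structure. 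These are exactly the ``delicate'' points you flag at the end, so your self-assessment is accurate; but since the present paper simply cites the result, none of this is needed here.
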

	
As is stated above, B\'{a}rta \cite{Ba} applied Theorem B to the hyperbolic equation \eqref{baeq} to show the unique existence of solutions.  
Similarly, we can apply Theorem B to the Cauchy problem \eqref{we0}--\eqref{initial0}  to obtain the solution $u(t)$.  
 In the argument of this paper, we need the fact that $u^\prime(t)$ is absolutely continuous with value in $\HHH_{-1/2}$. 
 This fact is verified by the following lemma, which is proved at the end.  

\begin{lemma}\label{absolute}
Let  $X$  be a separable Banach space.  
Assume that  $f(t)$  is an  $X$-valued continuous function on  $[a,b]$  and that  $g(t)$  is an  $X$-valued 
integrable function on  $(a,b)$.  
Assume moreover that there exists an at most countable subset  $N$  of  $[a,b]$  such that  
$g(t)$  is continuous on  $[a,b]\setminus N$  and that  $f(t)$  is differentiable 
on $ (a,b)\setminus N$ with $f^\prime(t) = g(t)$. 
 Then  $f(t)$  is absolutely continuous on  $[a,b]$,  and satisfies
$$
f(t) = \int_a^t g(s) ds + f(a) \quad \text{ for }\;t \in [a,b]. 
$$  
\end{lemma}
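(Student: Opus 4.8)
The plan is to reduce the statement to two sub-facts: that the Bochner integral $F(t):=\int_a^t g(s)\,ds$ is differentiable at every point of $(a,b)\setminus N$ with $F'(t)=g(t)$, and the classical rigidity fact that a continuous function on an interval whose derivative vanishes off an at most countable set is constant. Granting these, I would set $h(t):=f(t)-F(t)-f(a)$: then $h$ is continuous on $[a,b]$, $h(a)=0$, and $h'(t)=f'(t)-g(t)=0$ for every $t\in(a,b)\setminus N$, so $h\equiv0$, which is precisely the asserted identity $f(t)=f(a)+\int_a^t g(s)\,ds$. Absolute continuity of $f$ then follows immediately, since $\norm{F(t)-F(s)}\le\int_{[s,t]}\norm{g(r)}\,dr$ and $t\mapsto\int_{[a,t]}\norm{g(r)}\,dr$ is absolutely continuous, being the indefinite integral of the $L^1$ function $\norm{g(\cdot)}$.

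\emph{Differentiability of $F$.} For $t_0\in(a,b)\setminus N$ and small $r>0$, since $N$ is Lebesgue null,
\[
\Bigl\|\tfrac1r\bigl(F(t_0+r)-F(t_0)\bigr)-g(t_0)\Bigr\|
\le \tfrac1r\int_{(t_0,\,t_0+r)\setminus N}\norm{g(s)-g(t_0)}\,ds ,
\]
and the right side is at most $\sup\{\norm{g(s)-g(t_0)} : s\in(t_0,t_0+r)\setminus N\}$, which tends to $0$ as $r\to0$ because $g$ is continuous on $[a,b]\setminus N$. The left difference quotient is estimated the same way, so $F'(t_0)=g(t_0)$, and hence $h'(t_0)=0$.

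\emph{Rigidity.} Composing $h$ with an arbitrary $\varphi\in X^*$ reduces the second sub-fact to the scalar statement, after which $\varphi(h(t))=\varphi(h(a))$ for all $\varphi$ forces $h$ to be constant by the Hahn--Banach theorem. So it suffices to prove: a continuous $\psi:[a,b]\to\mathbb{C}$ with $\psi'(t)=0$ for every $t\in(a,b)\setminus N$ is constant; splitting into real and imaginary parts, take $\psi$ real. Fix $\varepsilon>0$, enumerate $N\cap[a,b]=\{c_1,c_2,\dots\}$, put $\theta(t)=\sum_{c_n<t}2^{-n}$ (nondecreasing, left-continuous, $0\le\theta\le1$, with $\theta(c_m^+)=\theta(c_m)+2^{-m}$), and consider
\[
E=\bigl\{t\in[a,b]:\ |\psi(s)-\psi(a)|\le\varepsilon(s-a)+\varepsilon\,\theta(s)\ \text{ for all }s\in[a,t]\bigr\}.
\]
Using continuity of $\psi$ and left-continuity of $\theta$ one checks that $E$ is closed, so $E=[a,M]$ with $M=\sup E$; and $M<b$ leads to a contradiction, distinguishing two cases. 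If $M\notin N$ then $\psi'(M)=0$ lets one extend the defining inequality slightly past $M$; if $M=c_m\in N$ then continuity of $\psi$ gives $|\psi(s)-\psi(M)|\le\varepsilon2^{-m}$ for $s$ near $M$, which is absorbed by the jump of $\theta$ immediately to the right of $M$. In either case $\sup E>M$, so in fact $E=[a,b]$; hence $|\psi(s)-\psi(a)|\le\varepsilon(b-a)+\varepsilon$ for all $s$, and letting $\varepsilon\to0$ gives $\psi\equiv\psi(a)$. (The one awkwardness, that $\psi$ need not be differentiable at the endpoint $a$, is sidestepped by first running this argument on every compact subinterval $\Subset (a,b)$ — where each candidate $M$ genuinely lies in $N$ or is a differentiability point of $\psi$ — and then invoking continuity of $\psi$ on all of $[a,b]$.)

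The differentiation of $F$ and the reduction $h\mapsto\varphi\circ h$ are routine; the main obstacle is the scalar rigidity step, and within it the correct design of the summable ``bump'' function $\theta$, so that the countably many exceptional points each contribute an arbitrarily small, summable amount that can be absorbed, together with the verification that the supremum defining $E$ cannot stall at a point of $N$.
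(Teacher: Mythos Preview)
Your proof is correct and takes a genuinely different route from the paper's. The paper argues directly: it covers $[a,b]$ by small intervals $(t_j-\delta_j,t_j+\delta_j)$ around the exceptional points $t_j\in N$ (on which the oscillation of $f$ is controlled by uniform continuity, with $\sum\delta_j$ small enough that $\int$ over their union is negligible) together with intervals centered at points $c\notin N$ on which both the linear approximation $f(t)\approx f(c)+(t-c)g(c)$ and the continuity estimate for $g$ are good to order $\varepsilon/(b-a)$; from a minimal finite subcover it extracts a partition $a=p_0<\cdots<p_L=b$ and bounds $\bigl\|f(b)-f(a)-\int_a^b g\bigr\|$ telescopically, splitting into contributions $I_1,I_2$ from intervals near $N$ and $I_3$ from the good intervals, each $\le\varepsilon/4$. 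Your argument instead decomposes the problem into two clean pieces: differentiability of the indefinite Bochner integral $F$ at continuity points of $g$ (routine), and the rigidity statement that a continuous function with vanishing derivative off a countable set is constant, which you reduce to the scalar case via Hahn--Banach and then prove by a Dini-type moving-supremum argument, using the summable jump function $\theta$ to absorb the countably many exceptional points. The paper's approach is self-contained and entirely hands-on, never invoking an auxiliary lemma; yours is more modular, isolates the scalar rigidity step as the conceptual core, and makes the role of countability of $N$ (versus mere nullity) transparent through the summability of the jumps of $\theta$.
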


Now we state a proposition on the unique existence of the solution \eqref{we0}--\eqref{initial0}. 
	\begin{prop} 
		\label{prop:global1}
			Let $b(t)$ be of bounded variation and integrable on $[0,\infty) $. 
		Let $c(t)$ be of bounded variation on $[0,\infty) $ satisfying \eqref{cass1}. 
		Then the following assertions hold. 
		For every $(\phi_0, \psi_0) \in \HH_{1/2} \times D(A^{1/2})$ satisfying  
			$\A^{1/2} \phi_0 \in D(A^{1/2})$, the Cauchy problem \eqref{we0}--\eqref{initial0} has a unique global weak  solution.  
			Furthermore,  
			$u^\prime \in AC_{\loc}([0,\infty);H)$ and 
			there exists an at most countable subset $N$ such that   
			$$
			(\A^{1/2}u(t), u^\prime(t)) \in C([0,\infty) \setminus N;\HHH_{1/2} \times \HHH_{1/2} ) ,
			$$	
and 
\eqref{we0} holds in the space $H$ at every $t \in [0,\infty) \setminus N$. 
	\end{prop}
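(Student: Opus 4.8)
The goal is to establish existence, uniqueness, and the regularity statements for the Cauchy problem \eqref{we0}--\eqref{initial0} when $b$ and $c$ are of bounded variation (with $b$ also integrable), by reducing to the first-order system \eqref{WES}--\eqref{initialS} and invoking Theorem B. First I would set up the functional framework: take $X = H \times H$ and $Y = D(A^{1/2}) \times D(A^{1/2})$ equipped with the graph norm, both of which are uniformly convex since $H$ is a Hilbert space and finite products of uniformly convex spaces are uniformly convex under the natural Euclidean combination of norms. The operator family is
\[
\mathbb{A}(t) = \begin{pmatrix} 0 & -A^{1/2} \\ c(t)^2 A^{1/2} & b(t) \end{pmatrix},
\]
with constant domain $Y$ (using that $c(t)$ is bounded above and below by \eqref{cass1}, so $c(t)^2 A^{1/2}$ has domain $D(A^{1/2})$, and $b(t)$ is a bounded multiplication operator). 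Each $\mathbb{A}(t)$ is closed because $A^{1/2}$ is.

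\textbf{Verifying the hypotheses of Theorem B.} The main work is checking conditions (i)--(iii). For (iii), the map $t \mapsto \mathbb{A}(t)$ is of bounded variation into $B(Y,X)$ because its entries $c(t)^2$ and $b(t)$ are of bounded variation (the former since $c \in BV$ and $\inf c > 0$, as noted in the introduction), and $A^{1/2} : Y \to X$ is bounded. For (i), I need stability with some constants $\beta, 1$: this is where the choice of a $t$-dependent equivalent norm on $X$ enters, which is precisely condition (ii). The natural candidate is the energy-type norm
\[
\Norm{(w,z)}{t}^2 = \norm{w}^2 + \frac{1}{c(t)^2}\norm{z}^2
\]
(or a variant), so that in this norm the skew-adjoint part of $\mathbb{A}(t)$ is exactly skew-adjoint and the remaining terms $b(t)$ and the $t$-derivative of the weight are controlled; then $\Norm{x}{t}/\Norm{x}{s} \le \exp(|a(t)-a(s)|)$ with $a(t)$ built from $\log c(t)$ plus an integral of $|b|$, which is of bounded variation. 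With the norm $\Norm{\cdot}{t}$, the resolvent estimate $\Norm{(\mathbb{A}(t)+\xi)^{-1}}{\LL(X_t)} \le (\xi - \beta)^{-1}$ for a suitable $\beta$ follows from a direct computation using the near-skew-adjointness. I expect this step --- producing the right equivalent norm and verifying the resolvent/stability bound simultaneously --- to be the main obstacle, since it requires balancing the $BV$ behavior of $c$ and the $L^1$ behavior of $b$ against the structure of the matrix operator.

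\textbf{From the evolution family to the weak solution.} Once Theorem B gives the family $U(t,s)$, I set $\mathbf{x}(t) = U(t,0)(\A^{1/2}\phi_0, \psi_0)$. Conclusions (a)--(b) give $\mathbf{x} \in C([0,\infty); X)$, and for data in $Y$ --- which here means $\A^{1/2}\phi_0 \in D(A^{1/2})$ and $\psi_0 \in D(A^{1/2})$, exactly the hypothesis of the proposition --- conclusion (c) gives a countable exceptional set $N$ with $\mathbf{x} \in C([0,\infty)\setminus N; Y)$ and $\frac{d}{dt}\mathbf{x}(t) = -\mathbb{A}(t)\mathbf{x}(t)$ in $X$ for $t \notin N$. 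To upgrade this pointwise-off-$N$ derivative to absolute continuity of $\mathbf{x}$ (hence of $u' $) with values in $\HHH_{-1/2} \times \HHH_{-1/2}$ --- or in $H$ for the second component --- I invoke Lemma \ref{absolute}: the function $g(t) = -\mathbb{A}(t)\mathbf{x}(t)$ is integrable and continuous off $N$ (using the $BV$-hence-bounded coefficients and continuity of $\mathbf{x}$ off $N$), so $\mathbf{x}(t) = \mathbf{x}(0) - \int_0^t \mathbb{A}(s)\mathbf{x}(s)\,ds$ and $\mathbf{x} \in AC_{\loc}$. Writing $\mathbf{x} = (w,z)$ and $u = \A^{-1/2}w$ then yields, via the dictionary between \eqref{WES}--\eqref{initialS} and \eqref{we0}--\eqref{initial0} recorded in the text, that $u$ is a weak solution with $u' = z \in AC_{\loc}([0,\infty);H)$, that $(\A^{1/2}u, u') \in C([0,\infty)\setminus N; \HHH_{1/2}\times\HHH_{1/2})$, and that \eqref{we0} holds in $H$ at every $t \notin N$. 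Uniqueness follows either from uniqueness of the $U(t,s)$ in Theorem B or directly by a Gronwall argument on $\Norm{\mathbf{x}(t)}{t}$ applied to the difference of two solutions (first localizing with the spectral projections $E(\lambda)$ as in the proof of Theorem 1 to make the computation rigorous, then letting $\lambda \to \infty$). Finally I would prove Lemma \ref{absolute} itself: separability of $X$ reduces the vector-valued statement to the scalar one by testing against a countable dense subset of $X^*$, where the claim is the standard fact that a continuous function whose derivative equals an integrable function off a countable set is absolutely continuous and equals the integral of that derivative.
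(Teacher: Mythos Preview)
Your proposal is correct and follows essentially the same route as the paper: set up the first-order system on $X=H\times H$ with $Y=\HHH_{1/2}\times\HHH_{1/2}$, verify the hypotheses of Theorem~B using a $c(t)$-weighted energy norm (the paper uses $c(t)^2\norm{x_1}^2+\norm{x_2}^2$, which is equivalent to your variant; note that the $b$-contribution belongs in the stability constant $\beta$, not in the function $a(t)$), extract the evolution family, apply Lemma~\ref{absolute} to upgrade the off-$N$ derivative to absolute continuity, and conclude with Gronwall for uniqueness.

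The one genuine methodological difference is in Lemma~\ref{absolute}. The paper proves it directly in the Banach-valued setting via a Vitali-type covering argument: it covers $[a,b]$ by small intervals around the countable exceptional points (on which uniform continuity of $f$ and integrability of $g$ give $\varepsilon$-control) together with intervals around the remaining points (on which differentiability of $f$ and continuity of $g$ give the usual first-order Taylor control), extracts a minimal finite subcover, and sums. Your proposed reduction---test against a countable dense set in $X^*$ to reduce to the scalar fact that a continuous function with integrable derivative off a countable set is the indefinite integral of that derivative---is shorter and perfectly valid here, since the continuity of $g$ off $N$ guarantees that $\int_a^t g$ is differentiable with derivative $g(t)$ at those points, and a continuous scalar function with vanishing derivative off a countable set is constant. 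The paper's argument has the virtue of being self-contained and not invoking the (slightly delicate) scalar result, while yours is more economical.
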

	
	\begin{proof}
	Let
	$$
	X_t  \equiv X =  H \times H, \quad Y = \HHH_{1/2} \times \HHH_{1/2}, 
	$$
	with inner product on $X_t$
	$$
	\begin{pmatrix}
	\begin{pmatrix}
	x_1 \\
	x_2 
	\end{pmatrix}, 
	\begin{pmatrix}
	y_1 \\
	y_2 
	\end{pmatrix} 
	\end{pmatrix}_t
	:= c(t)^2 (x_1, y_1)_H + (x_2,y_2)_H. 
		$$
We define 
	$$
	\mathbb A(t) = \begin{pmatrix} 
	0     & - A^{1/2} \\
	c(t)^2 A^{1/2} & b(t)   
	\end{pmatrix} \; 
	\text{ with domain } \;
	D(\mathbb A(t)) = Y. 
	$$
		Then in the same way as in the proof of \cite[section2]{Ba}, 
	we see that the assumption of Theorem B are satisfied. 
	Let $\{U(t,s) \in B(H \times H); 0 \le s \le t \}$ be a family of evolution operators given by Theorem B. 
	Put 
	$$
	\mathbf{x}(t) = \begin{pmatrix}
	w(t) \\
	v(t) 
	\end{pmatrix}
	:= U(t,0)
	\begin{pmatrix}
	\A^{1/2} \phi_0 \\
	\psi_0 
	\end{pmatrix}.
	$$
	Then
\begin{equation*}
\mathbf{x}(0) = \begin{pmatrix}
		\A^{1/2}\phi_0 \\
		\psi_0 
	\end{pmatrix} \in Y, 
\end{equation*}
and thus, Theorem B implies $\mathbf{x}(t)  \in Y $ with 
\begin{align}\label{xnorm}
& \Norm{\mathbf{x}(t)}{Y} \le \Norm{U(t,0)}{{\LL(Y)}} 
\Norm{
	\begin{pmatrix}
	\A^{1/2} \phi_0 \\
\psi_0 
\end{pmatrix}
}{Y}
\le  e^{\beta t}
\Norm{
	\begin{pmatrix}
	\A^{1/2} \phi_0 \\
	\psi_0 
	\end{pmatrix}
}{Y}
\end{align}
for every $t \ge 0$, and there exists at most countable set $N_0$ depending on initial data such that 
\begin{align}\label{xbelong}
&	\mathbf{x}(t)
	\in C\left([0,\infty);H \times H\right) \cap 
	C\left([0,\infty) \setminus N_0\,;\, \HHH_{1/2} \times \HHH_{1/2} \right), 
\end{align}
	and that $\mathbf{x}(t) $ is differentiable  on $[0,\infty) \setminus N_0$  and satisfies 
	\begin{align*}
	&\dfrac{d}{dt} \mathbf{x}(t)
+ \mathbb A(t) \mathbf{x}(t)
	=  
	\begin{pmatrix}
	0 \\
	0 
	\end{pmatrix} \; 
	\text{in}\; H \times H, 
	\; t \in [0,\infty) \setminus N_0.
	\end{align*}
	Since $c$ is of bounded variation on $[0,\infty)$, there is an at most countable set $N_c \subset [0,\infty)$ such that $c \in C([0,\infty) \setminus N_c)$. Thus, by \eqref{xnorm} and 	
	\eqref{xbelong}, we see that 
\begin{align*}
&	\mathbb A(t) \mathbf x(t) \in 
C\left([0,\infty) \setminus ( N_0 \cup N_c); H \times H \right) 
\cap L^1_{\loc}([0,\infty); H \times H).
		\end{align*}
Hence, 
\begin{equation}
\label{ode1}
\od{}{t}\mathbf x(t) = - \mathbb{A}(t) \mathbf{x}(t) 
\in C \left( [0,\infty) \setminus ( N_0 \cup N_c); H \times H \right) \cap L^1_{\loc}([0,\infty); H \times H).
\end{equation}
This fact and \eqref{xbelong}  with the aid of Lemma \ref{absolute} imply $\mathbf{x}(t) \in AC_{\loc}([0,\infty); H \times H)$. 
We define
\begin{equation}
\label{udef}
u(t) := \int_0^t v(s) ds + \phi_0.
\end{equation}	
Then
\begin{equation*}
u^\prime(t) = v(t) \in AC_{\loc}([0,\infty);H). 
\end{equation*}	
Since $v(t)$ is bounded in $\HHH_{1/2}$  by \eqref{xnorm},
we see that $u(t) - \phi_0 \in AC_{\loc}([0,\infty); \HHH_{1/2})$. 
Since $w$ is absolutely continuous, 
\begin{equation}\label{ode3}
\begin{aligned}
\A^{1/2} u(t) &= \int_0^t \A^{1/2} v(s) ds + \A^{1/2}\phi_0
= \int_0^t w^\prime(s) ds + \A^{1/2}\phi_0
= w(t),
\end{aligned}
\end{equation}
for every $t \in [0,\infty)$.  
From \eqref{ode1}--\eqref{ode3}, it follows that  		
$u$ satisfies \eqref{we0} in $H$ for all $t \in (0,\infty) \setminus (N_0 \cup N_c)$. 
	From the argument above, we see that $u$ is a weak solution of  \eqref{we0}--\eqref{initial0} and belongs to the class stated in Proposition \ref{prop:global1}. 

The uniqueness of the solution is easily seen by Gronwall's inequality. 

\end{proof}	

\begin{proof}[Proof of Lemma \ref{absolute}]
	Fix an arbitrary positive number  $\varepsilon$.  Since  $g(t)$  is integrable,  
	there exists a positive number  $\gamma$  such that the estimate
	\begin{equation}\label{gamma}
	\int_E \|g(t)\|\,dx < \frac{\varepsilon}{4} \text{ if } \mu(E) < \gamma
	\end{equation}
holds,	where $\mu(E)$ denotes the Lebesque measure of $E$ for Lebesque measurable set $E \subset \R$. 
	Put  $a = t_1$,  $b = t_2$  and  $N = \{t_j \mid j = 3,4,\dots\}$.  Since  
	$f(t)$  is uniformly continuous on  $[a,b]$,  there exists a positive number  
	$\delta_j < \gamma/{2^{j+1}}$  for every  $j = 1,2,\dots$  such that the 
	estimate  $\|f(t) - f(s)\| < \varepsilon/2^{j+2}$  holds for every  
	$s$,~$t \in [a,b]$  satisfying  $|t-s| < 2\delta_j$.  
	On the other hand, for every  
	\[
	c \in S := (a,b) \setminus \bigcup_{j=1}^\infty (t_j-\delta_j,t_j+\delta_j), 
	\]
	the function  $f(t)$  is differentiable at  $t = c$,  and  
	$f^\prime(t) = g(t)$  is continuous at  $t = c$.  Hence there exists a positive number  $\delta(c)$  
	such that the inequalities
	\begin{equation}\label{fg}
	\big\|f(t) - f(c) - (t-c)g(c)\big\| 
	\le \frac{\varepsilon|t-c|}{8(b-a)}, \quad
	\|g(t)-g(c)\| \le \frac{\varepsilon}{8(b-a)}
	\end{equation}
	hold for every  $t \in \bigl(c-\delta(c),c+\delta(c)\bigr) \cap [a,b]$. 
	Then we have  
	\[
	\bigcup_{j=1}^\infty (t_j-\delta_j,t_j+\delta_j) \cup
	\bigcup_{c\in S} \bigl(c-\delta(c),c+\delta(c)\bigr) 
	\supset [a,b].
	\]
	Hence we can choose a finite subset  $J_0$  of  $\mathbb{N}$  and a finite 
	sequence  $\{c_k \in S \}_{k=1}^M$  satisfying  
	$a < c_1 < c_2 < \dots < c_M < b$ such that  
	\begin{equation}\label{minimal}
	\bigcup_{j\in J_0} (t_j-\delta_j,t+\delta_j) \cup 
	\bigcup_{k=1}^M \bigl(c_k-\delta(c_k),c_k+\delta(c_k)\bigr) 
	\supset [a,b]. 
	\end{equation}
	Let  $(J, K)$ be a minimal pair of set such that $J  \subset J_0, \; K \subset \{1,2,\dots,M\}$  satisfying  
	\[
	\bigcup_{j\in J} (t_j-\delta_j,t_j+\delta_j) \cup
	\bigcup_{k\in K} \bigl(c_k-\delta(c_k),c_k+\delta(c_k)\bigr)
	\supset [a,b].
	\]
	Put
	\begin{align*}
	\mathcal{I} 
	&= \{(\alpha_m,\beta_m) \mid m = 1,\dots,L\} \\
	&= \bigl\{(t_j-\delta_j,t_j+\delta_j)\mid j \in J\bigr\}\cup
	\bigl\{\bigl(c_k-\delta(c_k),c_k+\delta(c_k)\bigr)\mid k\in K\bigr\}.
	\end{align*}
Renumbering if necessary, we can assume that 
$$
\alpha_m <  \alpha_{m + 1},\quad \beta_m < \beta_{m+1}
$$ 	
for $m = 1, 2,\dots,L-1$. 
By the minimality, we see that 
$\alpha_1 < a  \le \alpha_2$, 
$\beta_{L-1} \le b <  \beta_L$. 
We also have
	\begin{equation*}
	\alpha_m < \beta_{m-1} \le \alpha_{m+1} < \beta_m  
	\end{equation*}		
for every $ m = 2,\dots,L-1$. 
In fact, if $ \alpha_{m+1} < \beta_{m-1}$, then we have 
$$
\alpha_{m-1} < \alpha_m < \alpha_{m +1} < \beta_{m-1} < \beta_m < \beta_{m+1}.
$$
It follows that 
$(\alpha_m, \beta_m) \subset 
(\alpha_{m-1}, \beta_{m-1}) 
\cup (\alpha_{m+1}, \beta_{m+1})$, 
which contradicts the minimality of  $	\mathcal{I}$. 

	We now choose  a sequence  $\{p_m\}_{m=0}^L$  satisfying  
	$a = p_0 < p_1 < \dots < p_L = b$  such that   
	$\alpha_{m+1} < p_m < \beta_{m}$  holds for every  
	$m = 1,\dots,L-1$.  
			Here we note 
		\begin{equation}
		\label{alphapbeta}
		\alpha_m < p_{m-1} 
		< p_m < \beta_m  
		\end{equation}
		for every $m = 1,\dots,L$. 	
			Furthermore, we can choose  $\{p_m\}_{m=1}^{L-1}$  
	so that  $p_{m-1} \le c_k \le p_m$  holds if  
	$(\alpha_{m},\beta_{m})$  is of the form $\bigl(c_k - \delta(c_k),c_k + \delta(c_k)\bigr)$.  
Put	
	\begin{align*} 	
	\Lambda &:= \bigl\{m \mid  (\alpha_{m},\beta_{m})
	= (t_{j(m)}-\delta_{j(m)},t_{j(m)}+\delta_{j(m)})
	\text{ with some } j(m)\bigr\},
	\\
	P &:= \bigl\{m \mid 
	(\alpha_{m},\beta_{m})
	= \bigl(c_{k(m)}-\delta(c_{k(m)}),c_{k(m)}+\delta(c_{k(m)})\bigr)
	\\* 
	&\qquad \qquad \qquad \qquad \qquad \qquad     \text{ with some } k(m) \in K \bigr\}.
	\end{align*}	
	Then we have
	\begin{multline} \label{sum0}
	\left\|f(b)-f(a)-\int_a^b g(s)\,ds\right\| 
	\\
	= \left\|\sum_{m=1}^L\bigl\{f(p_m)-f(p_{m-1})\bigr\} 
	- \int_a^b g(s)\,ds\right\|
	\le I_1 + I_2 + I_3\,,
	\end{multline}
	where
	\begin{align*}
	I_1 &= \sum_{m \in \Lambda}\|f(p_m)-f(p_{m-1})\|, 
	\\
	I_2 &= \left\|\int_{E} g(s)\,ds\right\|
	\quad \text{ with }
	E = \bigcup_{m\in\Lambda}[p_{m-1},p_m],
	\\
	I_3 &= \sum_{m \in P} 
	\left\|f(p_m)-f(p_{m-1})-\int_{p_{m-1}}^{p_m}g(s)\,ds\right\|.
	\end{align*}
Observing \eqref{alphapbeta}, we have
	\begin{equation} \label{sum1}
	I_1
	< \sum_{j=1}^\infty \sup_{t,s \in (t_j-\delta_j,t_j+\delta_j)} \|f(t)-f(s)\|
	< \frac{\varepsilon}{4}\,.
	\end{equation}
	Next, since  
	\[
	\mu(E) 
	= \sum_{m\in\Lambda} (p_m-p_{m-1})
	< \sum_{m\in\Lambda} (\beta_{m}-\alpha_{m})
	< \sum_{j=1}^\infty 2\delta_j 
	< \gamma,
	\]
	inequality \eqref{gamma} implies
	\begin{equation} \label{sum2}
	I_2 = \left\|\int_E g(s)\,ds\right\| < \frac{\varepsilon}{4}\,.
	\end{equation}
	Finally, we treat the case that  $m \in P$, that is, 
	$\bigl(\alpha_{m},\beta_{m}\bigr) = 
	\bigl(c_k-\delta(c_k),c_k+\delta(c_k)\bigr)$  
	holds with some  $k = k(m) \in K$.  In this case we have  
	\[
	c_k-\delta(c_k) < p_{m-1} \le c_k \le p_m < c_k+\delta(c_k).
	\]
	Then observing \eqref{fg}, we have 
	\begin{align*}
	&    \left\|f(p_m) - f(c_k) - \int_{c_k}^{p_m}g(s)\,ds\right\| \\
	&\le \big\|f(p_m) - f(c_k) - (p_m-c_k)g(c_k)\big\|
	+ \int_{c_k}^{p_m}\|g(s) - g(c_k)\|\,ds \\
	&\le \frac{\varepsilon(p_m - c_k)}{4|b-a|}\,.
	\end{align*}
	In the same way we have
	\[
	\left\|f(c_k) - f(p_{m-1}) - \int_{p_{m-1}}^{c_k}g(s)\,ds\right\|
	\le \frac{\varepsilon(c_k - p_{m-1})}{4|b-a|}\,.
	\]
	Summing up we obtain  
	\[
	\left\|f(p_m) - f(p_{m-1}) - \int_{p_{m-1}}^{p_m}g(s)\,ds\right\|
	\le \frac{\varepsilon(p_m - p_{m-1})}{4|b-a|}
	\]
	for every  $m \in P$,  which implies
	\begin{equation} \label{sum3}
	I_3 \le \frac{\varepsilon}{4(b-a)}\sum_{m\in P}(p_m-p_{m-1})
	\le \frac{\varepsilon}{4}\,.
	\end{equation}
	Substituting  \eqref{sum1},  \eqref{sum2}  and \eqref{sum3}    into \eqref{sum0},  we conclude
	\[
	\left\|f(b)-f(a) - \int_a^b g(s)\,ds\right\| < \varepsilon.
	\]
	Since  $\varepsilon > 0$  is arbitrary,  we have  
	\[
	f(b)-f(a) = \int_a^b g(s)\,ds.
	\]
	Applying the same argument on  $[a,t]$  for every  $t \in [a,b]$,  we 	obtain the conclusion.
\end{proof}

\end{document}